\newcommand{\R}{\mathbb{R}}
\newtheorem{proposition}{Proposition}
\newtheorem{theorem}{Theorem}
\newtheorem{remark}{Remark}
\newtheorem{corollary}{Corollary}
\title{An implicit two-phase obstacle-type problem for the $p$-Laplacian with the fractional gradient}
\author{Pedro Miguel Campos\and José Francisco Rodrigues}
\date{\today}
\begin{document}
\maketitle

\begin{center}
    \textit{Dedicated to Nina Nikolaevna Ural'tseva on the occasion of her 90th birthday}
\end{center}

\begin{abstract}
    In this work we study an inhomogeneous two-phase obstacle-type problem associated to the $s$-fractional $p$-Laplacian. Besides the existence and uniqueness of solutions, we study the convergence of the solutions when $s\to 1$ to the classical problem. We also study the continuous dependence with respect to the level-set function $v$, yielding an existence result for a two-phase obstacle-type problem with an implicit level-set function.
\end{abstract}

\section{Introduction}
The aim of this work is to study the continuous dependence and some of its consequences of the following two-phase obstacle-type problem in an arbitrary open bounded set $\Omega\subset\R^d$, formally in the form
\begin{equation}\label{eq:two_phase_equation}
    \begin{cases}
        -\Delta^s_p u+\lambda_+\chi_{\{u>v\}}-\lambda_-\chi_{\{u<v\}}=f &\mbox { in } \Omega\\
        u=0 &\mbox{ on } \R^d\setminus\Omega,
    \end{cases}
\end{equation}
where $-\Delta^s_p$ is the fractional $p$-Laplacian, with $p\in(1,\infty)$ and $s\in(0,1)$. This operator arises naturally in the fractional setting of the Riesz fractional gradient $D^s$ as a generalization of the classical $p$-Laplacian $-\Delta_p$, formally corresponding to the limit case $s=1$, with $D^1=D$ being the classical gradient. Here $f=f(x)$ and $\lambda=\lambda_{\pm}(x)\geq0$ a.e. in $\Omega$ are functions in $L^q(\Omega)$ for suitable $q$, and $\chi_A$ is the characteristic function of the set $A\subset \Omega$. The level set function $v=v(x)$ may be directly given in $L^1(\Omega)$ or implicitly given as a function $v=\Phi[u]$ depending on the solution $u$.

For each given $v$, the solution $u=u^s$ to \eqref{eq:two_phase_equation} can be obtained as the minimizer in the Lions-Calderón space $\Lambda^{s,p}_0(\Omega)$, a fractional Sobolev-type space for $0<s<1$, to the functional
\begin{equation}\label{eq:functional_minimized}
    J^s_p(w)=\frac{1}{p}\int_{\R^d}{|D^s w|^p}\,dx+\int_\Omega{[\lambda_+(w-v)^+ +\lambda_-(w-v)^-]}\,dx -\int_\Omega{fw}\,dx,
\end{equation}
with $w^\pm=\max\{\pm w, 0\}$. We shall show that $u=u^s$ converges in the limit case $s=1$, to $u^1$, which is the minimizer of the functional $J^1_p(w)$ in the classical Sobolev space $W^{1,p}_0(\Omega)=\Lambda^{1,p}_0(\Omega)$, for $1<p<\infty$. However, these variational solutions do not always satisfy the equation (1) a.e. in $\Omega$ but rather the following weaker form of that equation

\begin{equation}\label{eq:weak_formulation_with_quasicharacteristic}
    -\Delta^s_p u+\lambda_+\chi_{{}_{>}}-\lambda_-\chi_{{}_{<}} =f \quad \mbox { in } \Omega,
\end{equation}
with the characteristic functions $\chi_{\{u>v\}}$ and $\chi_{\{u<v\}}$ replaced, respectively, by quasi-characteristic functions $\chi_{{}_{>}}$ and $\chi_{{}_{<}}$, such that
\begin{equation}\label{eq:quasicharacteristic}
    \chi_{\{u>v\}}\leq \chi_{{}_{>}}\leq \chi_{\{u\geq v\}}, \quad \chi_{\{u<v\}}\leq \chi_{{}_{<}}\leq \chi_{\{u\leq v\}} \quad \mbox { in } \Omega.
\end{equation}

This type of equations arise in stationary models of interior heat control \cite{duvaut1976inequalities} and in composite membranes, as referred in \cite{petrosyan2012regularity}, which presents an extended exposition, with further references, of the regularity of the free boundaries
\begin{equation}\label{eq:free_boundaries}
    \Gamma_{<v}=\partial\{u<v\}\cap\Omega\quad \mbox{ and }\quad \Gamma_{>v}=\partial\{u>v\}\cap\Omega,
\end{equation}
in the classical case of the Laplacian, with $v=0$ and $f=0$. The free boundaries $\Gamma_{<0}$ and $\Gamma_{>0}$ may, in general, present branching points when they start intersecting the boundary of the degenerate phase $\partial\{u=0\}$, whenever this intermediate phase, the inter-phases, has non empty interior. In this case the parts of the free boundaries $\Gamma_{<0}\cap \partial \{u=0\}$ and $\Gamma_{>0}\cap \partial \{u=0\}$ behave locally as the free boundaries of the upper and lower obstacle problems, respectively. For nonlinear operators of $p$-Laplacian type, in the classical two-phase problem for $v=f=0$ and constant $\lambda_{\pm}>0$, it has been shown that the free boundaries have locally finite $(d-1)$-Hausdorff measure for $p$ close to 2 (for $p>2$ in \cite{edquist2009pLaplacian} and for $p<2$ in \cite{zheng2018pLaplacian}). In a nondegenerate case for large $f\neq 0$ and more general quasilinear operators when the inter-phases set has zero Lebesgue measure it has been shown that the free boundary is, up to a null subset, the union of an at most countable family of $C^1$-surfaces in \cite[Theorem 4.2]{rodrigues2016free}.

It was suggested in \cite{bellido2020Fractional} that the fractional $p$-Laplacian can be used to study nonlocal hyperelastic materials, so it might be of interest to consider the problem \eqref{eq:two_phase_equation} to design certain nonlinear and nonlocal composite membranes. On the other hand, in the case $p=2$ it can be used to model equilibrium situations of foraging, since it have been well documented \cite{sims2008scaling} that some predators exhibit spatial Levy-like behaviour when searching for food, which correspond to the fractional Laplacian.

Besides proving the existence of solutions, we also study the continuous dependence of these solutions with respect to the given function $v$ and with respect to $s$ as $s\nearrow 1$. These results have interesting consequences. The continuous dependence of these solutions with respect to the given function $v$ allows us to prove an existence result for implicit two-phase problems formally in the form
\begin{equation}\label{eq:quasi-variational_two_phase_equation}
    \begin{cases}
        -\Delta^s_p u+\lambda_+(x)\chi_{\{u>\Phi(u)\}}-\lambda_-(x)\chi_{\{u<\Phi(u)\}}=f &\mbox { in } \Omega\\
        u=0 &\mbox{ on } \R^d\setminus\Omega.
    \end{cases}
\end{equation}
where $\Phi$ is an operator which depends on the solution of the problem and the characteristic functions are relaxed to quasi-characteristic functions as in \eqref{eq:quasicharacteristic}.

A motivation for studying the convergence of the fractional problem to the local problem, when $s\to 1$, arises from the fact that, in general, the properties of the free boundaries $\Gamma_{<v}$ and $\Gamma_{>v}$ of the two-phase problems for the classical $p$-Laplacian and for fractional differential operators are far from being well-understood. In fact, as it was observed in \cite{oton2023integro}, for the linear fractional case, the structure of the free boundaries is only known for problems that can be transformed, using the Caffarelli-Silvestre (or Stinga-Torrea) extension method, see \cite{caffarelli2007extension, stinga2010extension}, into a degenerate local problem. We remark that it is not known if there is also an extension method for the fractional differential operators that arise in the context of the Riesz fractional gradient, aside from the fractional Laplacian. Nevertheless, we prove here that the solutions to the two phase problem in the fractional framework approximate the classical solutions, which suggests that they may share, at least for $s$ near 1, some of the properties of the classical problem.

After introducing the functional framework in Section \ref{sec:preliminaries}, we present in Section \ref{sec:characterization_solutions} the characterization of the solution to the two-phase obstacle-type problems for $0<s\leq 1$ by using an approximation by $\varepsilon$-regularized semilinear problems. In particular, we give a precise new estimate for the $L^p$-norm of the gradients $D^s$.

In Section \ref{sec:convergence_fractional_to_classical}, we prove the convergence of the solution $u^s$ of the fractional problem to the solution of the classical problem $u^1$ when $s\to 1$, as well as a weak stability of the quasi-characteristic functions $\chi^s_{{}_{>}}$ and $\chi^s_{{}_{<}}$ towards $\chi^1_{{}_{>}}$ and $\chi^1_{{}_{<}}$, respectively. This convergence is in fact strong if the limit problem is non-degenerate, i.e., if $\chi^1_{{}_{>}}=\chi_{\{u_1>v\}}$ and $\chi^1_{{}_{<}}=\chi_{\{u_1<v\}}$, which holds whenever the Lebesgue measure of the inter-phases $\{u_1=v\}$ is zero.

Finally, in Section \ref{sec:quasi-variational_problem}, exploring the continuous dependence of the solution and of the quasi-characteristic functions of the Dirichlet problem \eqref{eq:weak_formulation_with_quasicharacteristic}, we prove the existence of a weak solution to the implicit two-phase problem \eqref{eq:quasi-variational_two_phase_equation}, giving examples of possible continuous operators $\Phi$ with interesting properties, including a special case with a vanishing in measure inter-phase $\{u=\Phi(u)\}$.

We observe that, although our results are developed in the framework of $-\Delta^s_p$, they can be easily generalized to more general quasi-linear operators of the $p$-Laplacian type $-D^s\cdot (\boldsymbol{a}(x,D^s u)$ as in \cite[Section 3.3.1]{campos2023unilateral}.

\section{Preliminaries}\label{sec:preliminaries}
We consider the fractional Riesz gradient for smooth functions with compact support $\varphi$ as it was defined in \cite{shieh2015On},
\begin{equation*}
    D^s \varphi=D(I_{1-s}\varphi)
\end{equation*}
where $I_{1-s}$ is the Riesz potential of order $1-s$, $0<1-s<1$,
\begin{equation*}\label{eq:riez_potential}
    I_{1-s}\varphi(x)=(I_{1-s}*\varphi)(x)=\frac{2^s\Gamma\left(\frac{d+s+1}{2}\right)}{\pi^{d/2}(d-1+r)\Gamma\left(\frac{1-s}{2}\right)}\int_{\R^d}{\frac{\varphi(y)}{|x-y|^{d+s-1}}}\,dy.
\end{equation*}
Although this definition requires a certain regularity of the function $\varphi$, it can be extended to more general classes of functions. In fact, due to the fractional integration by parts formula
\begin{equation}\label{eq:frac_integration_by_parts}
    \int_{\R^d}{\varphi(x) D^s\cdot \Phi(x)}\,dx=-\int_{\R^d}{\Phi(x)\cdot D^s \varphi(x)}\,dx,
\end{equation}
which is first valid for all functions $\varphi\in C^\infty_c(\R^d)$ and $\Phi\in C^\infty_c(\R^d;\R^d)$, we can then extend it to functions $\varphi\in L^p(\R^d)$ for which there exists a function $G\in L^1_\text{loc}(\R^d;\R^d)$ such that
\begin{equation*}
    \int_{\R^d}{\varphi(x) D^s\cdot \Phi(x)}\,dx=-\int_{\R^d}{\Phi(x)\cdot G(x)}\,dx, \quad\forall \Phi\in C^\infty_c(\R^d;\R^d).
\end{equation*}
This distributional characterization of the (weak) fractional gradient allows to identify $G=D^s\varphi$ in the Banach space, the Lions-Calderón space,
\begin{equation*}
    \Lambda^{s,p}_0(\Omega)= \overline{C^\infty_c(\Omega)}^{\|\cdot\|_{\Lambda^{s,p}(\R^d)}},
\end{equation*}
for $0<s<1$ and $1<p<\infty$, with the norm
\begin{equation*}
    f\mapsto \|f\|_{\Lambda^{s,p}(\R^d)}=\left(\|f\|_{L^p(\R^d)}^p+\|D^s f\|_{L^p(\R^d;\R^d)}^p\right)^{1/p}.
\end{equation*}
When $\Omega=\R^d$, this space coincides with the Bessel potential or generalized Sobolev spaces, as they are called in the Harmonic Analysis literature \cite{shieh2015On}, and it also coincides with the space of functions $f\in L^p(\R^d)$, for which there exists a (weak) fractional gradient $D^s f\in L^p(\R^d;\R^d)$, as it was proven in \cite{brue2022AsymptoticsII,kreisbeck2022Quasiconvexity}. When $\Omega\subset\R^d$ is an arbitrary bounded set, as it will be assumed in this work, the functions $f\in\Lambda^{s,p}_0(\Omega)$ will be considered extended to the whole $\R^d$ by zero, their fractional gradient $D^sf$ is different from zero on $\R^d\setminus\Omega$ and we may consider $f\in\Lambda^{s,p}_0(\Omega)$ as a closed subspace of $f\in\Lambda^{s,p}_0(\R^d)$.

Moreover, the identity \eqref{eq:frac_integration_by_parts} can be used, as in the classical case $s=1$, to give a weak formulation to partial differential equations, since it can be extended when the functions on the right hand side, $\varphi$ and $D^s\cdot \Phi$ are in $L^p(\Omega)$ and in $L^{p'}(\R^d)$, respectively, and on the left-hand-side, $\Phi$ and $D^s\varphi$ are in $L^{p'}(\Omega, \R^d)$ and in $L^p(\R^d;\R^d)$, respectively. Consequently, this characterization provides a convenient framework to study fractional partial differential equations through variational methods. In particular, this is useful throughout this article to deal with the fractional $p$-Laplacian,
\begin{equation}
    \Delta^s_p f=D^s\cdot(|D^s f|^{p-2}D^s f),
\end{equation}
since, for $f\in\Lambda^{-s,p'}(\Omega)$,
\begin{equation*}
    \langle -\Delta^s_p f,g\rangle_{\Lambda^{-s,p'}(\Omega)\times\Lambda^{s,p}_0(\Omega)}=\int_{\R^d}{|D^s f|^{p-2}D^s f\cdot D^s g}\,dx, \quad \forall g\in\Lambda^{s,p}_0(\Omega),
\end{equation*}
and $\Delta^s_p f$ is a distribution belonging to the dual space $\Lambda^{-s,p'}(\Omega)=\left(\Lambda^{s,p}_0(\Omega)\right)'$.

This space has some properties that are similar to those observed in the classical Sobolev spaces $W^{1,p}_0(\Omega)\subset\Lambda^{s,p}_0(\Omega)$, for $0<s<1$ and $1<p<\infty$. In fact, as $D^sf\to Df$ when $s\to 1$ for $f\in W^{1,p}_0(\Omega)$, we shall denote the gradient $Df=D^1f$ and $W^{1,p}_0(\Omega)=\Lambda^{1,p}_0(\Omega)$, by considering also $Df=0$ in $\R^d\setminus\Omega$. 

\begin{proposition}[Poincaré's inequality]\label{prop:fractional_poincare_inequality}
    Let $\Omega$ be a bounded open subset of $\R^d$, $p\in(1,\infty)$ and $s\in(0,1]$. There exists a positive constant $C_P>0$ depending only on $p$, $d$, $\Omega$ and $R$, such that for all open sets $\Omega_1\supset B_{2R}(0)$ ($\Omega_1$ can be unbounded) and for all $f\in \Lambda^{s,p}_0(\Omega)$, we have 
    \begin{equation}\label{eq:poincare_inequality}
        \|f\|_{L^p(\Omega)}\leq \frac{C_P}{s}\|D^s f\|_{L^p(\Omega_1;\R^d)}.
    \end{equation}
\end{proposition}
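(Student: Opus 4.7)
The plan is to prove the inequality first for $f \in C^\infty_c(\Omega)$ and then extend by density; the main tool is a Riesz-potential representation of $f$ in terms of its fractional gradient.

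\emph{Step 1: reduction by density.} Since $\Lambda^{s,p}_0(\Omega)$ is by definition the completion of $C^\infty_c(\Omega)$ under the $\Lambda^{s,p}$-norm, and both sides of \eqref{eq:poincare_inequality} are continuous in that norm, it is enough to establish the estimate for $f\in C^\infty_c(\Omega)$, extended by zero to $\R^d$ and therefore supported in $\overline{\Omega}\subset B_R(0)$.

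\emph{Step 2: pointwise representation.} I will combine the identity $f = I_s(-\Delta)^{s/2}f$ with the relation $(-\Delta)^{s/2}f = -R\cdot D^s f$, where $R=(R_1,\dots,R_d)$ is the vector Riesz transform; this last identity follows from $D^s_j f = R_j(-\Delta)^{s/2}f$ (differentiate $D^s f = D(I_{1-s}f)$ and use $\partial_j = R_j(-\Delta)^{1/2}$) together with $\sum_j R_j^2 = -I$. This yields
\[
f(x) = -I_s\bigl(R\cdot D^s f\bigr)(x),
\]
and, after carrying out the convolution of the two kernels, a pointwise bound of the form
\[
|f(x)| \le C_{d,s}\int_{\R^d} \frac{|D^s f(y)|}{|x-y|^{d-s}}\,dy.
\]

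\emph{Step 3: near and far split.} For $x\in\Omega\subset B_R$ I will split the integral as $\int_{\Omega_1}+\int_{\R^d\setminus\Omega_1}$. The near part is controlled by the Hardy--Littlewood--Sobolev theorem applied to $|D^s f|\chi_{\Omega_1}\in L^p$, followed by Hölder's inequality on the bounded set $\Omega$, producing a bound in terms of $\|D^s f\|_{L^p(\Omega_1)}$ times $|\Omega|^{s/d}$. For the far part, the assumption $\Omega_1\supset B_{2R}(0)$ forces $|x-y|\ge R$ for every $y\in\R^d\setminus\Omega_1$, so the kernel is bounded by $R^{-(d-s)}$ and the tail integral is estimated by Hölder (using integrability of $|x-y|^{-(d-s)p'}$ at infinity when $p<d/s$, with a short interpolation/iteration argument treating the remaining range of $p$). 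Tracking the normalization $c_{d,s}$ of $I_s$, which behaves like $c_d/s$ as $s\to 0$ through its $\Gamma$-function prefactor, together with the $s$-independent $L^p$-bound for the Riesz transforms, then yields the factor $1/s$ in the constant.

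The hardest part will be Step 3: because $f$ has compact support but $D^s f$ does not, one has to handle a genuinely nonlocal tail, and the condition $\Omega_1\supset B_{2R}(0)$ is precisely what makes this tail small enough to close the estimate. Obtaining the sharp $1/s$ dependence in the constant is the other delicate point, demanding careful bookkeeping of normalizations in $I_s$ and the Riesz-transform identities as $s\to 0$.
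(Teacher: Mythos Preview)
The paper does not provide a self-contained proof here; it simply refers to \cite{bellido2020gamma} and \cite[Theorem~2.4]{campos2023unilateral}. So there is no detailed argument in the paper to compare with. Nevertheless, your sketch has a genuine gap in Step~3.

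Your H\"older estimate on the far part yields a bound of the form
\[
\int_{\R^d\setminus\Omega_1}\frac{|D^s f(y)|}{|x-y|^{d-s}}\,dy
\;\le\;\|D^s f\|_{L^p(\R^d\setminus\Omega_1)}\,\Bigl(\int_{|x-y|\ge R}|x-y|^{-(d-s)p'}\,dy\Bigr)^{1/p'},
\]
but the right-hand side of \eqref{eq:poincare_inequality} contains only $\|D^s f\|_{L^p(\Omega_1)}$. The exterior norm $\|D^s f\|_{L^p(\R^d\setminus\Omega_1)}$ is not zero---$D^s f$ does not have compact support even though $f$ does---and nothing in your outline controls it by the interior norm. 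If instead you invoke the pointwise tail decay $|D^s f(y)|\lesssim \|f\|_{L^1}\,|y|^{-(d+s)}$ for $|y|>2R$, the far contribution becomes $C\,|\Omega|\,R^{-d}\,\|f\|_{L^p(\Omega)}$ with a scale-invariant constant $C$ of order one, which cannot be absorbed into the left-hand side. In short, your near/far split would prove the full-space inequality $\|f\|_{L^p(\Omega)}\le (C/s)\|D^s f\|_{L^p(\R^d)}$, but it does not deliver the localization to an arbitrary $\Omega_1\supset B_{2R}(0)$, which is exactly the extra content of this proposition beyond the standard fractional Poincar\'e--Sobolev inequality.

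A smaller issue: the near-part HLS step needs $sp<d$, while the alternative direct H\"older bound (local integrability of $|x-\cdot|^{-(d-s)p'}$ when $sp>d$) fails at infinity because $\Omega_1$ may be unbounded; you only mention the exponent restriction for the far piece.

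To close the argument you would need an additional, separate lemma: for $f$ supported in $B_R$, the full norm $\|D^s f\|_{L^p(\R^d)}$ is comparable to $\|D^s f\|_{L^p(B_{2R})}$ with a constant depending on $d,p,R$ and tracked in $s$. Once that is in hand, your Steps~1--2 (which do give the full-space Poincar\'e) finish the job. This comparability is the missing ingredient in your outline and is where the cited references do the real work.
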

\begin{proof}
    See \cite{bellido2020gamma} and \cite[Theorem 2.4]{campos2023unilateral}.
\end{proof}

As in the classical case, for bounded sets $\Omega$, in $\Lambda^{s,p}_0(\Omega)$ we can work, as we shall do here, with the equivalent norm $f\mapsto \|f\|_{\Lambda^{s,p}_0(\Omega)}=\left(\|D^s f\|_{L^p(\R^d;\R^d)}^p\right)^{1/p}$,  for $0<s\leq 1$ and $1<p<\infty$.

\begin{proposition}[Sobolev embeddings]\label{prop:sobolev_embeddings_lions_calderon}
    Let $0< s\leq 1$, $1<p<\infty$ and $\Omega\subset \R^d$ be an arbitrary open set. Then,
    \begin{itemize}
        \item[1)] if $sp<d$, we have $\Lambda^{s,p}_0(\Omega)\subset L^q(\Omega)$ for $1\leq q\leq p^*_s=\frac{dp}{d-sp}$. Moreover, when $q=p^*_s$ we have
        \begin{equation}\label{eq:sobolev_inequality}
            \|f\|_{L^{p^*_s}(\Omega)}\leq C\|D^s f\|_{L^p(\R^d;\R^d)};
        \end{equation}
        \item[2)] if $sp=d$, then $\Lambda^{s,p}_0(\Omega)\subset L^q(\Omega)$ for every $q\in[1,+\infty)$; and
        \item[3)] if $sp>d$, then we have $\Lambda^{s,p}_0(\Omega)\subset C^{0,s-\frac{d}{p}}(\overline{\Omega})$ with the estimate
        \begin{equation*}
            |f(x)-f(y)|\leq C|x-y|^{s-\frac{d}{p}}\|D^s f\|_{L^p(\R^d)}.
        \end{equation*}
    \end{itemize}
\end{proposition}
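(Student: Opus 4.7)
The plan is to derive all three embeddings from a single representation of $\varphi$ as a Riesz potential of order $s$ applied to an $L^p$-function controlled by $D^s\varphi$. By density of $C^\infty_c(\Omega)$ in $\Lambda^{s,p}_0(\Omega)$, it suffices to prove each estimate for $\varphi\in C^\infty_c(\R^d)$ and then pass to the limit, together with treating $s=1$ as the classical Sobolev embedding (since $D^1=D$ and the Riesz potentials degenerate to the identity in an appropriate sense).

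The core representation step goes as follows. Starting from $D^s\varphi=D(I_{1-s}\varphi)$, convolving with the Riesz potential $I_s$ and using the semigroup identity $I_s\ast I_{1-s}=I_1$ yields $I_s(D^s\varphi)=D(I_1\ast\varphi)$, which up to a dimensional constant is the vector Riesz transform $\mathbf R\varphi$. Taking the divergence of both sides and using $\mathbf R\cdot\mathbf R=-\mathrm{Id}$ produces an identity of the form $\varphi=c\,I_s(\mathbf R\cdot D^s\varphi)$. The Calder\'on--Zygmund $L^p$-boundedness of the Riesz transforms, valid for $1<p<\infty$, then gives $g:=c\,\mathbf R\cdot D^s\varphi\in L^p(\R^d)$ satisfying $\|g\|_{L^p(\R^d)}\le C\|D^s\varphi\|_{L^p(\R^d;\R^d)}$, with $\varphi=I_s g$.

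For case 1 ($sp<d$), the Hardy--Littlewood--Sobolev inequality $\|I_s g\|_{L^{p^*_s}(\R^d)}\le C\|g\|_{L^p(\R^d)}$ immediately yields \eqref{eq:sobolev_inequality}; for sub-critical $q\in[1,p^*_s)$, I would interpolate between $L^{p^*_s}$ and the $L^p$-Poincar\'e bound of Proposition~\ref{prop:fractional_poincare_inequality}, or simply use H\"older's inequality when $\Omega$ is bounded. For case 2 ($sp=d$), I would pick $s'\in(0,s)$ with $s'p<d$, observe the continuous inclusion $\Lambda^{s,p}_0(\Omega)\hookrightarrow\Lambda^{s',p}_0(\Omega)$ (obtained from the same representation but with $I_{s-s'}$ replacing $I_s$), apply case 1 at level $s'$ to embed into $L^{p^*_{s'}}$, and then let $s'\nearrow s$, noting $p^*_{s'}\to\infty$. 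For case 3 ($sp>d$), I would apply H\"older's inequality to the difference $I_s(x-\cdot)-I_s(y-\cdot)$ in the representation $\varphi=I_s g$; the resulting classical Morrey-type estimate gives $|\varphi(x)-\varphi(y)|\le C|x-y|^{s-d/p}\|g\|_{L^p}$, hence the claimed H\"older continuity.

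The main technical obstacle is the representation step, where one must carefully justify the convolution identities between $I_s$, $D$, and $\mathbf R$, handle the distributional meaning of each operator on $L^p$ functions, and track normalising constants. Once this is in hand, the three cases are a standard application of Hardy--Littlewood--Sobolev and Morrey theory. In fact all of these arguments for $0<s<1$ are developed in detail in \cite{shieh2015On}, which could equally be cited in lieu of the above proof, with the classical Sobolev embedding invoked separately at the endpoint $s=1$.
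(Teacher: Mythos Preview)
Your proposal is correct and aligns with the paper's treatment: the paper's proof consists solely of the citation ``See \cite[Theorems 1.8 and 1.11]{shieh2015On}'', and the argument you sketch---the representation $\varphi=I_s g$ with $g=c\,\mathbf R\cdot D^s\varphi$ controlled in $L^p$ by Calder\'on--Zygmund, followed by Hardy--Littlewood--Sobolev for $sp<d$ and a Morrey-type estimate for $sp>d$---is precisely the content of that reference, as you yourself note at the end.
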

\begin{proof}
    See \cite[Theorems 1.8 and 1.11]{shieh2015On}.
\end{proof}

As in the classical $s=1$ case, the Sobolev's embeddings are also compact for all $q$, such that $1\leq q<p^*_s$, that is the Relich-Kondrachov theorem also holds for $0<s<1$.

For future reference, we denote the conjugate exponent $p^\#_s=(p^*_s)'$, where $p^\#_s=dp/[d(p-1)+sp]$ if $sp<d$, $p^\#_s=1$ if $sp>d$ and $p^\#_s>1$ if $sp=d$, for $0<s\leq1$.

\begin{proposition}\label{prop:fractional_rellich_kondrachov}
    Let $\Omega\subset \R^d$ be a bounded open set, and let $0\leq t<s\leq 1$ and $p\in(1,\infty)$. Then,
    \begin{equation*}
        \Lambda^{s,p}_0(\Omega)\Subset \Lambda^{t,p}_0(\Omega).
    \end{equation*}
\end{proposition}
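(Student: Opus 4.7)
The plan is to deduce compactness into $\Lambda^{t,p}_0(\Omega)$ from the Rellich-Kondrachov embedding $\Lambda^{s,p}_0(\Omega)\Subset L^p(\Omega)$ (recalled in the remark following Proposition~\ref{prop:sobolev_embeddings_lions_calderon}) by means of a log-convex interpolation inequality of the form
\begin{equation*}
\|D^t v\|_{L^p(\R^d;\R^d)}\leq C\,\|v\|_{L^p(\R^d)}^{1-t/s}\,\|D^s v\|_{L^p(\R^d;\R^d)}^{t/s},\qquad v\in\Lambda^{s,p}_0(\Omega).
\end{equation*}
The endpoint $t=0$ is precisely the Rellich-Kondrachov statement, so I may assume $0<t<s\leq 1$.

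Given a bounded sequence $\{u_n\}\subset\Lambda^{s,p}_0(\Omega)$, the reflexivity of this space (valid since $1<p<\infty$, as it is isomorphic via Bessel potentials to a closed subspace of $L^p(\R^d)$) yields a weakly convergent subsequence $u_n\rightharpoonup u$ with $u\in\Lambda^{s,p}_0(\Omega)$. Rellich-Kondrachov upgrades this to strong convergence $u_n\to u$ in $L^p(\Omega)$. Applying the displayed interpolation inequality to $v_n:=u_n-u$, I obtain $\|D^t v_n\|_{L^p(\R^d;\R^d)}\to 0$, since $\|v_n\|_{L^p}\to 0$ while $\|D^s v_n\|_{L^p}$ remains bounded. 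This is exactly strong convergence of $u_n$ to $u$ in $\Lambda^{t,p}_0(\Omega)$.

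The main obstacle is proving the interpolation inequality. The naive intertwining identity $D^t v=I_{s-t}(D^s v)$, with $I_{s-t}$ the Riesz potential of order $s-t\in(0,1)$, is insufficient on its own, because $I_{s-t}$ fails to map $L^p$ into $L^p$ (only Hardy-Littlewood-Sobolev gains into a larger exponent). Instead, I would exploit the identification of $\Lambda^{s,p}(\R^d)$ with the Bessel potential space $H^{s,p}(\R^d)$ recalled in Section~\ref{sec:preliminaries}: the quantity $\|D^s v\|_{L^p}$ is equivalent, on zero-extensions of $\Lambda^{s,p}_0(\Omega)$-functions, to $\|(1-\Delta)^{s/2}v\|_{L^p}$ up to Riesz transforms (bounded on $L^p$ for $1<p<\infty$) and lower-order contributions absorbable via Poincaré's inequality (Proposition~\ref{prop:fractional_poincare_inequality}). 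Complex interpolation of the analytic family $z\mapsto(1-\Delta)^{z/2}$ between $z=0$ and $z=s$, in the spirit of Stein-Calderón, then produces the desired log-convex bound with exponent $\theta=t/s$, completing the proof.
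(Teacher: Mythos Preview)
The paper does not supply a proof of this proposition; it is stated as a background fact in Section~\ref{sec:preliminaries} without argument or explicit citation (the neighbouring Propositions~\ref{prop:fractional_poincare_inequality} and~\ref{prop:compactness_ala_bellido} are referred to \cite{bellido2020gamma} and \cite{campos2023unilateral}, and presumably this one is meant to be covered by the same sources). So there is nothing in the paper to compare your argument against directly.

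On its own merits, your strategy is sound and is one of the standard routes to such a compact embedding: establish the endpoint case $t=0$ (Rellich--Kondrachov into $L^p$) and then interpolate. The log-convex inequality you target follows from the identification $\Lambda^{\sigma,p}(\R^d)=H^{\sigma,p}(\R^d)$ recalled in Section~\ref{sec:preliminaries} together with the complex interpolation $[L^p,H^{s,p}]_{t/s}=H^{t,p}$; the link $D^\sigma=R(-\Delta)^{\sigma/2}$ via the vector Riesz transform (bounded on $L^p$ for $1<p<\infty$) that you allude to is exactly what underlies this identification, and the passage from the full Bessel norm to $\|D^\sigma\cdot\|_{L^p(\R^d;\R^d)}$ on $\Lambda^{\sigma,p}_0(\Omega)$ is the Poincar\'e equivalence of Proposition~\ref{prop:fractional_poincare_inequality}. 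The one place worth making explicit is that the Poincar\'e-based norm equivalence on the right-hand factor $\|D^s v_n\|_{L^p}$ requires $v_n=u_n-u\in\Lambda^{s,p}_0(\Omega)$; this holds because the weak limit $u$ lies in the closed subspace $\Lambda^{s,p}_0(\Omega)$. With that detail recorded, the argument goes through.
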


Moreover, for our results about the approximation of solutions and the continuous dependence of the solutions, as $s\to \sigma=1$, we shall use the following two properties.

\begin{proposition}\label{prop:compactness_ala_bellido}
    Let $\Omega$ be a bounded open subset of $\R^d$ and $p\in(1,+\infty)$. Let $s_n\in(0,1]$ be a convergent sequence of numbers such that $s_n\to\sigma\in(0,1]$, and $u_n\in \Lambda^{s_n,p}_0(\Omega)$ be such that $\sup_n{\|D^{s_n} u_n\|_{L^p(\R^d;\R^d)}}<\infty$. Then, there exists a function $u\in\Lambda^{\sigma,p}_0(\Omega)$ and a subsequence also denoted by $u_n$ satisfying
    \begin{equation*}
        u_n\to u \mbox{ in } \Lambda^{t,p}_0(\Omega), \mbox{ for all } 0\leq t<\sigma, \quad \mbox{ and }\quad D^{s_n} u_n\rightharpoonup D^\sigma u \mbox{ in } L^p(\R^d;\R^d).
    \end{equation*}
\end{proposition}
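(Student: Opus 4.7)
First, I would bound $\{u_n\}$ uniformly and extract weakly convergent subsequences. By Proposition~\ref{prop:fractional_poincare_inequality} applied to each $u_n$ with the operator $D^{s_n}$, one has $\|u_n\|_{L^p(\Omega)}\le (C_P/s_n)\|D^{s_n}u_n\|_{L^p(\R^d;\R^d)}$, and since $s_n\to\sigma>0$ the prefactor $1/s_n$ stays bounded, so the zero extensions form a bounded sequence in $L^p(\R^d)$ with common support in $\overline{\Omega}$. Reflexivity of $L^p$ then furnishes, on a subsequence (not relabelled), the weak limits $u_n\rightharpoonup u$ in $L^p(\R^d)$ (with $u=0$ outside $\Omega$) and $D^{s_n}u_n\rightharpoonup g$ in $L^p(\R^d;\R^d)$, for some $g\in L^p(\R^d;\R^d)$ still to be identified.

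Next, I would identify $g=D^\sigma u$ via the distributional characterization. For $\Phi\in C^\infty_c(\R^d;\R^d)$ the fractional integration by parts \eqref{eq:frac_integration_by_parts} gives
\begin{equation*}
    \int_{\R^d} u_n\, D^{s_n}\cdot\Phi\, dx = -\int_{\R^d} \Phi\cdot D^{s_n} u_n\, dx,
\end{equation*}
whose right-hand side tends to $-\int\Phi\cdot g\,dx$ by weak convergence. For the left-hand side, the Riesz-potential representation $D^{s}\cdot\Phi=D\cdot I_{1-s}\Phi$, together with the smoothness and compact support of $\Phi$, yields $D^{s_n}\cdot\Phi\to D^\sigma\cdot\Phi$ strongly in $L^{p'}(\R^d)$ (by dominated convergence using the explicit kernel and its uniform decay); combining with $u_n\rightharpoonup u$ in $L^p$ shows the left-hand side tends to $\int u\,D^\sigma\cdot\Phi\,dx=-\int\Phi\cdot D^\sigma u\,dx$. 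Hence $g=D^\sigma u$ in the weak sense, and since $u$ vanishes outside $\Omega$ it belongs to $\Lambda^{\sigma,p}_0(\Omega)$.

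Finally, to upgrade to strong convergence in $\Lambda^{t,p}_0(\Omega)$ for $0\le t<\sigma$, I would exploit the semigroup identity $I_{1-t}=I_{s-t}\ast I_{1-s}$ of the Riesz potential (valid whenever $t<s\le 1$), which yields $D^t v = I_{s-t}\ast D^s v$. Applying this with $s=s_n$ (for $n$ large so that $s_n>t$) and with $s=\sigma$ produces the decomposition
\begin{equation*}
    D^t(u_n-u)=(I_{s_n-t}-I_{\sigma-t})\ast D^{s_n}u_n + I_{\sigma-t}\ast(D^{s_n}u_n-D^\sigma u),
\end{equation*}
in which the first summand is controlled by the convergence of the Riesz kernels and the uniform $L^p$-bound on $D^{s_n}u_n$, while the second is treated via the smoothing effect of $I_{\sigma-t}$ combined with the common compact support of the $u_n$. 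The main obstacle will be closing this last step rigorously: since by Hardy--Littlewood--Sobolev $I_{\sigma-t}$ maps $L^p$ into $L^{p^*_{\sigma-t}}$ rather than into $L^p$, a localisation using $\operatorname{supp}(u_n-u)\subset\overline{\Omega}$ together with a Fréchet--Kolmogorov-type argument is required, where the uniform equicontinuity $\|\tau_h u_n-u_n\|_{L^p}\to 0$ is extracted from the bound on $D^{s_n}u_n$ via the Riesz representation of finite differences, using that $s_n$ is bounded below by some $s_0>0$.
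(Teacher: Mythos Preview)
The paper's own proof is just a citation: it invokes \cite{bellido2020gamma} for the base case $\sigma=1$, $t=0$, and then says one uses the compact embedding of Proposition~\ref{prop:fractional_rellich_kondrachov} (as in \cite{campos2023unilateral}) to reach general $0\le t<\sigma\le 1$. Your proposal is therefore not a comparison against a detailed argument but an attempt to supply one.

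Your first two steps (uniform $L^p$ bound via Poincar\'e, weak compactness, and identification $g=D^\sigma u$ through the duality \eqref{eq:frac_integration_by_parts}) are essentially sound and match the standard route. One caveat: you conclude ``since $u$ vanishes outside $\Omega$ it belongs to $\Lambda^{\sigma,p}_0(\Omega)$'', but $\Lambda^{\sigma,p}_0(\Omega)$ is \emph{defined} as the closure of $C^\infty_c(\Omega)$, so this step needs the (nontrivial, though known for these spaces) fact that any $u\in\Lambda^{\sigma,p}(\R^d)$ with $u=0$ a.e.\ in $\R^d\setminus\Omega$ lies in that closure; you should cite this rather than assert it.

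The genuine gap is your final step. The decomposition
\[
D^t(u_n-u)=(I_{s_n-t}-I_{\sigma-t})\ast D^{s_n}u_n + I_{\sigma-t}\ast(D^{s_n}u_n-D^\sigma u)
\]
is algebraically correct, but neither term is under control in $L^p(\R^d)$ by the tools you name. For the first term, ``convergence of the Riesz kernels'' is not an $L^1$ statement, so there is no direct $L^p\to L^p$ bound to exploit. For the second, you invoke the ``common compact support of the $u_n$'', but it is $u_n$ that is supported in $\overline\Omega$, \emph{not} $D^{s_n}u_n$ or $D^\sigma u$; these have full support in $\R^d$, so the localisation you propose does not apply to the convolution as written. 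Your fallback Fr\'echet--Kolmogorov argument on the $u_n$ themselves would at best yield strong $L^p$ convergence, i.e.\ only the case $t=0$, and does not by itself upgrade to $\Lambda^{t,p}_0$ for $t>0$.

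The cleaner route---and the one the paper points to---is to bypass the explicit Riesz-potential manipulation entirely: fix $t<t'<\sigma$, observe that for all large $n$ one has $s_n>t'$ and hence a uniform bound $\|u_n\|_{\Lambda^{t',p}_0(\Omega)}\le C$ (from the continuous inclusion $\Lambda^{s_n,p}_0(\Omega)\hookrightarrow\Lambda^{t',p}_0(\Omega)$ with constant controlled for $s_n$ bounded away from $t'$), and then apply the compact embedding $\Lambda^{t',p}_0(\Omega)\Subset\Lambda^{t,p}_0(\Omega)$ of Proposition~\ref{prop:fractional_rellich_kondrachov} to extract a strongly convergent subsequence, whose limit must be $u$ by uniqueness of weak limits.
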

\begin{proof}
    See \cite{bellido2020gamma} for the case $\sigma=1$ and $t=0$. One can use Proposition \ref{prop:fractional_rellich_kondrachov} as in \cite{campos2023unilateral} to generalize to $0<t<\sigma\leq 1$.
\end{proof}

\begin{proposition}\label{prop:lower_estimate_p_laplacian}
    Let $0<s,t\leq 1$, $1<p<\infty$ and assume that $u\in\Lambda^{s,p}_0(\Omega)$ and $v\in \Lambda^{t,p}_0(\Omega)$. Then, with $\alpha_p=2^{2-p}$ for $p\geq 2$ and $\alpha_p=p-1$ for $1<p\leq 2$,
    \begin{multline*}
        \int_{\R^d}{(|D^s u|^{p-1}D^su-|D^t v|^{p-1}D^tv)\cdot D^s u-D^t v)}\,dx\\
        \geq
        \begin{cases}
            \alpha_p \|D^s u-D^t v\|^p_{L^p(\R^d;\R^d)} &\mbox{ if } p\geq 2\\
            \alpha_p \frac{\|D^s u-D^t v)\|^2_{L^p(\R^d;\R^d)}}{(\|D^s u\|_{L^p(\R^d;\R^d)}+\|D^t v\|_{L^p(\R^d;\R^d)})^{2-p}}, &\mbox{ if } 1<p\leq 2.
        \end{cases}
    \end{multline*}
\end{proposition}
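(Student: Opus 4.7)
The plan is to reduce the global inequalities to the classical pointwise monotonicity estimates for the map $\xi\mapsto|\xi|^{p-2}\xi$ on $\R^d$, applied at each $x$ with $a=D^s u(x)$ and $b=D^t v(x)$, and then to integrate (applying Hölder in the subquadratic case). Since $D^s u$ and $D^t v$ lie in $L^p(\R^d;\R^d)$, all integrals are finite and the integrand is interpreted as $0$ where $a=b=0$.

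First, I would invoke the well-known vector-valued pointwise inequalities: for all $a,b\in\R^d$,
\begin{equation*}
    (|a|^{p-2}a-|b|^{p-2}b)\cdot(a-b) \geq 2^{2-p}|a-b|^p \qquad (p\geq 2),
\end{equation*}
and
\begin{equation*}
    (|a|^{p-2}a-|b|^{p-2}b)\cdot(a-b) \geq (p-1)\,\frac{|a-b|^2}{(|a|+|b|)^{2-p}} \qquad (1<p\leq 2),
\end{equation*}
which are classical. For $p\geq 2$ it then suffices to integrate the pointwise bound over $\R^d$ to obtain the first case of the statement directly.

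For $1<p\leq 2$, after integration the right-hand side involves the non-polynomial integrand $|a-b|^2/(|a|+|b|)^{2-p}$, and the key step is to relate this to $\|D^s u-D^t v\|_{L^p}^p$. I would do so by writing, on $\{|a|+|b|>0\}$,
\begin{equation*}
    |a-b|^p = \frac{|a-b|^p}{(|a|+|b|)^{p(2-p)/2}}\,(|a|+|b|)^{p(2-p)/2},
\end{equation*}
and applying Hölder's inequality with conjugate exponents $2/p$ and $2/(2-p)$, obtaining
\begin{equation*}
    \int_{\R^d}|a-b|^p\,dx \leq \left(\int_{\R^d}\frac{|a-b|^2}{(|a|+|b|)^{2-p}}\,dx\right)^{p/2}\left(\int_{\R^d}(|a|+|b|)^p\,dx\right)^{(2-p)/2}.
\end{equation*}
Raising to the power $2/p$, rearranging, and using $\|\,|a|+|b|\,\|_{L^p}\leq \|D^s u\|_{L^p}+\|D^t v\|_{L^p}$ together with the pointwise bound then yields the second case.

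The only real subtlety is setting up the Hölder step correctly in the subquadratic regime; once this is done, everything else reduces to pointwise algebra plus integration.
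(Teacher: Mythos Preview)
Your proposal is correct: the pointwise monotonicity bounds you state are the classical ones, the integration for $p\geq 2$ is immediate, and your H\"older argument with exponents $2/p$ and $2/(2-p)$ in the subquadratic case is set up correctly and yields exactly the stated estimate after the triangle inequality $\|\,|a|+|b|\,\|_{L^p}\leq\|D^s u\|_{L^p}+\|D^t v\|_{L^p}$.

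The paper does not actually carry out a proof; it simply invokes \cite[Lemma 1.11]{edmunds2023Fractional} ``in the vectorial case''. What you have written is precisely the standard derivation behind that lemma, so your argument is not a different route but rather an explicit unpacking of the citation. The only added value of your write-up is that it is self-contained; the only thing the paper's version buys is brevity.
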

\begin{proof}
    This is a simple application of \cite[Lemma 1.11]{edmunds2023Fractional} in the vectorial case.
\end{proof}

\section{Characterization of the solution to the two-phase problem}\label{sec:characterization_solutions}
Throughout this section we shall assume 
\begin{equation}\label{eq:v_in_q_prime}
    v\in L^{q'}(\Omega)
\end{equation}
with $q'=q/(q-1)$, $1\leq q'<p^*_s$, for some $q$, $p^\#_s<q\leq \infty$, and
\begin{equation}\label{eq:nondegeneracy_lambda}
    \lambda_+, \lambda_-\in L^q(\Omega)\mbox{ with }\lambda_\pm\geq 0 \mbox{ a.e. }x\in\Omega, 
\end{equation}
where $p^\#_s=(p^*_s)'$ is the Sobolev conjugate exponent, i.e. $p^\#_s=dp/[d(p-1)+sp]$ if $sp<d$, $p^\#_s=1$ if $sp>d$ and $p^\#_s>1$ if $sp=d$, for $0<s\leq1$.

We define the convex functional $\Psi_v(w)$ in $\Lambda^{s,p}_0(\Omega)$, for $0<s<1$ and in $W^{1,p}_0(\Omega)=\Lambda^{1,p}_0(\Omega)$ for $s=1$,

\begin{equation*}
    \Psi_v(w)=\int_\Omega{(\lambda_+(x)(w-v)^+ +\lambda_-(x)(w-v)^-)}\,dx
\end{equation*}

\begin{proposition}\label{prop:variational_inequality}
    Assume $f\in \Lambda^{-s,p'}(\Omega)=(\Lambda^{s,p}_0(\Omega))'$. Then for each s, $0<s\leq 1$, there exists a unique solution $u=u_s\in \Lambda^{s,p}_0(\Omega)$ to
    \begin{equation}\label{eq:existence_solution}
        \Psi_v(w)-\Psi_v(u)\geq \langle f+\Delta^s_p u, w-u\rangle_{\Lambda^{-s,p'}(\Omega)\times \Lambda^{s,p}_0(\Omega)}\quad \forall w\in \Lambda^{s,p}_0(\Omega).
    \end{equation}
    We can express the variational inequality \eqref{eq:existence_solution} in terms of subdifferentials, by stating that there exists $\zeta_v\in \Lambda^{-s,p'}(\Omega)$ such that
    \begin{equation}\label{eq:existence_solution_subdifferential}
        \zeta_v=f+\Delta^s_p u\in \partial \Psi_v(u)
    \end{equation}
\end{proposition}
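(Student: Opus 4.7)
The plan is to recognize \eqref{eq:existence_solution} as the Euler--Lagrange condition for minimizing the convex functional
\begin{equation*}
    J^s_p(w)=\frac{1}{p}\int_{\R^d}|D^sw|^p\,dx+\Psi_v(w)-\langle f,w\rangle_{\Lambda^{-s,p'}(\Omega)\times \Lambda^{s,p}_0(\Omega)}
\end{equation*}
on the reflexive Banach space $\Lambda^{s,p}_0(\Omega)$, and then to obtain existence and uniqueness by the direct method of the calculus of variations.

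First I would check that $J^s_p$ is well-defined and finite on $\Lambda^{s,p}_0(\Omega)$. By the Sobolev embedding in Proposition \ref{prop:sobolev_embeddings_lions_calderon}, the assumption $q'<p^*_s$ gives $\Lambda^{s,p}_0(\Omega)\hookrightarrow L^{q'}(\Omega)$, so that $(w-v)^\pm\in L^{q'}(\Omega)$ by \eqref{eq:v_in_q_prime}; Hölder's inequality together with $\lambda_\pm\in L^q(\Omega)$ from \eqref{eq:nondegeneracy_lambda} then guarantees $\Psi_v(w)<+\infty$, and the duality pairing is finite since $f\in\Lambda^{-s,p'}(\Omega)$. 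Coercivity follows from $\Psi_v\geq 0$, Poincaré's inequality (Proposition \ref{prop:fractional_poincare_inequality}) and Young's inequality applied to $|\langle f,w\rangle|\leq \|f\|_{\Lambda^{-s,p'}(\Omega)}\|D^sw\|_{L^p(\R^d;\R^d)}$, giving $J^s_p(w)\geq \frac{1}{2p}\|D^sw\|_{L^p(\R^d;\R^d)}^p - C$.

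For weak lower semicontinuity, the principal term $\frac{1}{p}\|D^s\cdot\|_{L^p}^p$ is convex and norm-continuous, hence weakly lsc; the linear functional $\langle f,\cdot\rangle$ is weakly continuous; and by the compact embedding $\Lambda^{s,p}_0(\Omega)\Subset L^{q'}(\Omega)$ (again using $q'<p^*_s$ and Proposition \ref{prop:fractional_rellich_kondrachov} combined with an interpolation, or a direct Rellich--Kondrachov-type result), $\Psi_v$ is in fact continuous along weakly convergent sequences because $w\mapsto (w-v)^\pm$ is Lipschitz from $L^{q'}(\Omega)$ to $L^{q'}(\Omega)$. The direct method then yields a minimizer $u$; strict convexity of the principal term (since $p>1$ and $\|D^s\cdot\|_{L^p}$ is an equivalent norm) together with the convexity of $\Psi_v$ forces uniqueness.

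Finally, to pass from the minimization to \eqref{eq:existence_solution}, I would use convexity of $J^s_p$: since $u$ is a minimizer, $t\mapsto J^s_p(u+t(w-u))$ attains its minimum at $t=0$ over $[0,1]$, so computing the right derivative at $t=0$ and using that $\frac{1}{p}\|D^s\cdot\|_{L^p}^p-\langle f,\cdot\rangle$ is Gâteaux differentiable with derivative $-\Delta^s_pu-f$, while $\Psi_v$ is only convex (its right-derivative along $w-u$ is bounded above by $\Psi_v(w)-\Psi_v(u)$), we obtain \eqref{eq:existence_solution}. Equivalently, since $\Psi_v$ is convex and continuous on $\Lambda^{s,p}_0(\Omega)$, the Moreau--Rockafellar sum rule gives $\partial J^s_p(u)=-\Delta^s_pu-f+\partial\Psi_v(u)$, and $0\in\partial J^s_p(u)$ rewrites as \eqref{eq:existence_solution_subdifferential}. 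The only mildly delicate point is the weak continuity of $\Psi_v$, which I expect to be the place where the assumption $q'<p^*_s$ is essential; everything else is a routine application of convex variational calculus.
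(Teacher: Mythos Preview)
Your proposal is correct and follows essentially the same approach as the paper: apply the direct method of the calculus of variations to the strictly convex, l.s.c., coercive functional $J^s_p$ to obtain a unique minimizer, and then derive the Euler variational inequality \eqref{eq:existence_solution} by decomposing $J^s_p$ into its Gâteaux-differentiable part and the convex term $\Psi_v$. Your write-up is in fact more detailed than the paper's own proof, which dispatches the result in a couple of sentences.
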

\begin{proof}
    This is a simple application of the direct method of calculus of variations to the minimisation problem for the strictly convex and l.s.c. functional defined in \eqref{eq:functional_minimized}, which is coercive by Poincaré inequality:
    \begin{equation*}
        u\in\Lambda^{s,p}_0(\Omega):\quad J^s_p(u)\leq J^s_p(w),\quad\forall w\in\Lambda^{s,p}(\Omega).
    \end{equation*}
    Decomposing the functional $J^s_p$ in the differentiable component plus  $\Psi_v$, we obtain the Euler variational inequality \eqref{eq:existence_solution} for the minimal solution $u$.
\end{proof}

To characterize the function $\zeta_v$ in the case $f\in L^{p^\#_s}(\Omega)\subset \Lambda^{-s,p'}(\Omega)$, we study the following regularised problem for $\varepsilon>0$:
\begin{equation}\label{eq:regularised_problem}
    \begin{cases}
        -\Delta^s_p u_\varepsilon+\lambda_+H_\varepsilon(u_\varepsilon-v)-\lambda_-H_\varepsilon(v-u_\varepsilon)=f &\mbox { in } \Omega\\
        u_\varepsilon=0 &\mbox{ on } \R^d\setminus\Omega.
    \end{cases}
\end{equation}
Here, the characteristic functions $\chi_{\{u>v\}}$ and $\chi_{\{u<v\}}$ that appear in \eqref{eq:two_phase_equation} are replaced by $G'_\varepsilon(u-v)=H_\varepsilon(u-v)$ and $G'_\varepsilon(v-u)=H_\varepsilon(v-u)$, respectively, where
\begin{equation*}
    H_\varepsilon(t)=\begin{cases}
        0, & t\leq 0\\
        \frac{t}{\varepsilon}, & 0\leq t\leq \varepsilon\\
        1, & t\geq \varepsilon
    \end{cases}
    \quad \mbox{ and }\quad
    G_\varepsilon(t)=\begin{cases}
        0, & t\leq 0\\
        \frac{t^2}{2\varepsilon}, & 0\leq t\leq \varepsilon\\
        t-\frac{\varepsilon}{2}, & t\geq \varepsilon.
    \end{cases}
\end{equation*}

\begin{theorem}\label{thm:exist_and_conv_regula_solut}
    Under the assumption $f\in L^{p^\#_s}(\Omega)$, for each s, $0<s\leq 1$, as $\varepsilon \to 0$ the  unique solution $u_\varepsilon\in\Lambda^{s,p}_0(\Omega)$ to \eqref{eq:regularised_problem} converges in $\Lambda^{s,p}_0(\Omega)$ towards the unique solution $u$ of \eqref{eq:existence_solution} and satisfies the estimate
     \begin{equation}\label{eq:convergence_gradients}
        \|D^s(u_\varepsilon-u)\|_{L^p(\R^d;\R^d)}\leq C\varepsilon^{1/\max\{p,2\}}.
    \end{equation}
\end{theorem}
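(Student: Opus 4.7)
The plan is to follow the direct method plus a monotonicity-versus-convexity comparison argument. First I would verify existence and uniqueness for the regularised problem by minimising
\[
J^s_{p,\varepsilon}(w) = \frac{1}{p}\int_{\R^d} |D^s w|^p\, dx + \int_\Omega [\lambda_+ G_\varepsilon(w-v) + \lambda_- G_\varepsilon(v-w)]\, dx - \int_\Omega fw\, dx
\]
on $\Lambda^{s,p}_0(\Omega)$. This functional is strictly convex and weakly lower semicontinuous, and Poincaré's inequality combined with the Sobolev embedding (Proposition \ref{prop:sobolev_embeddings_lions_calderon}) applied to the linear term $\int fw$ gives coercivity; the unique minimiser $u_\varepsilon$ satisfies \eqref{eq:regularised_problem} weakly because $G_\varepsilon' = H_\varepsilon$. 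Testing $J^s_{p,\varepsilon}(u_\varepsilon)\leq J^s_{p,\varepsilon}(0)$ and using $0\leq G_\varepsilon(t)\leq t^+$ produces a bound $\sup_\varepsilon \|D^s u_\varepsilon\|_{L^p(\R^d;\R^d)}\leq C$ that is independent of $\varepsilon$; this bound will be needed for the singular case $1<p<2$.

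The heart of the proof is the quantitative comparison. I would test the weak form of \eqref{eq:regularised_problem} against $u_\varepsilon-u$ and apply the subgradient inequality for the convex $G_\varepsilon$ at the points $u_\varepsilon-v$ and $v-u_\varepsilon$ separately:
\[
H_\varepsilon(u_\varepsilon-v)(u_\varepsilon-u)\geq G_\varepsilon(u_\varepsilon-v)-G_\varepsilon(u-v),
\]
and the analogous inequality with signs reversed for the $\lambda_-$ contribution. Writing $\Psi^\varepsilon_v$ for the regularised potential (with $G_\varepsilon$ in place of the positive part), this yields
\[
\int_{\R^d} |D^s u_\varepsilon|^{p-2}D^s u_\varepsilon \cdot D^s(u_\varepsilon - u)\,dx + \Psi^\varepsilon_v(u_\varepsilon) - \Psi^\varepsilon_v(u) \leq \int_\Omega f(u_\varepsilon - u)\,dx.
\]
Next I would insert $w=u_\varepsilon$ into the variational inequality \eqref{eq:existence_solution} and rearrange to obtain
\[
-\int_{\R^d} |D^s u|^{p-2}D^s u \cdot D^s(u_\varepsilon - u)\,dx \leq \Psi_v(u_\varepsilon) - \Psi_v(u) - \int_\Omega f(u_\varepsilon - u)\,dx.
\]
Adding these two inequalities makes the $f$-terms cancel and leaves the monotonicity integrand of Proposition \ref{prop:lower_estimate_p_laplacian} on the left, with the remainder $[\Psi_v(u_\varepsilon)-\Psi^\varepsilon_v(u_\varepsilon)]+[\Psi^\varepsilon_v(u)-\Psi_v(u)]$ on the right.

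To close the estimate I would use the pointwise control $0\leq t^+-G_\varepsilon(t)\leq \varepsilon/2$ (verified by inspection on the three pieces defining $G_\varepsilon$), so that $\Psi_v(u_\varepsilon)-\Psi^\varepsilon_v(u_\varepsilon)\leq \tfrac{\varepsilon}{2}(\|\lambda_+\|_{L^1(\Omega)}+\|\lambda_-\|_{L^1(\Omega)})$ and $\Psi^\varepsilon_v(u)-\Psi_v(u)\leq 0$; the right-hand side is therefore $O(\varepsilon)$. Applying Proposition \ref{prop:lower_estimate_p_laplacian} then gives, for $p\geq 2$, $\alpha_p\|D^s(u_\varepsilon-u)\|_{L^p}^p\lesssim \varepsilon$, hence the rate $\varepsilon^{1/p}$, while for $1<p\leq 2$ the uniform bound on $\|D^s u_\varepsilon\|_{L^p}+\|D^s u\|_{L^p}$ reduces the inequality to $\|D^s(u_\varepsilon-u)\|_{L^p}^2\lesssim \varepsilon$, giving the rate $\varepsilon^{1/2}$. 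The combined exponent is $1/\max\{p,2\}$, which is \eqref{eq:convergence_gradients}; strong convergence in $\Lambda^{s,p}_0(\Omega)$ is then immediate by Poincaré.

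The main technical obstacle I anticipate is orchestrating the two subgradient inequalities together with the variational inequality so that the $f$-integrals cancel exactly and the right-hand side reduces to the clean $O(\varepsilon)$ remainder $\Psi_v-\Psi^\varepsilon_v$; a careless sign or an inequality in the wrong direction would convert the argument into an uninformative bound. The $1<p<2$ branch is the other delicate point, since the monotonicity lower bound there is only quadratic and must be combined with the a priori gradient bound established in the first step to recover the stated exponent.
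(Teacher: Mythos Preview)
Your argument is correct and in fact more streamlined than the paper's. The paper proceeds in three steps: after the a priori bound, it extracts a weakly convergent subsequence, passes to weak-$*$ limits $\chi_{{}_{>}},\chi_{{}_{<}}$ of $H_\varepsilon(u_\varepsilon-v)$, $H_\varepsilon(v-u_\varepsilon)$, verifies the quasi-characteristic inequalities \eqref{eq:quasicharacteristic}, and shows the limit $\zeta=\lambda_+\chi_{{}_{>}}-\lambda_-\chi_{{}_{<}}$ lies in $\partial\Psi_v(u)$; only then does it subtract the limit equation from the regularised one and estimate $\int_\Omega(\zeta_\varepsilon-\zeta)(u-u_\varepsilon)\,dx$ by a case analysis on $\{u>v\}$, $\{u=v\}$, $\{u<v\}$ using $(1-H_\varepsilon(t))t\leq\varepsilon$. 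You bypass all of Step~2 by testing directly against the variational inequality \eqref{eq:existence_solution} (whose solution $u$ is already known from Proposition~\ref{prop:variational_inequality}) and exploiting the convexity of $G_\varepsilon$ together with the elementary pointwise bound $0\leq t^+-G_\varepsilon(t)\leq\varepsilon/2$; this even yields a constant half the size of the paper's. The trade-off is that the paper's longer route simultaneously produces the characterisation $\zeta_v=\lambda_+\chi_{{}_{>}}-\lambda_-\chi_{{}_{<}}$ with \eqref{eq:condition_chi}, which is precisely what is harvested in Theorem~\ref{thm:convergence_solutions_regularised_problem}; your proof establishes Theorem~\ref{thm:exist_and_conv_regula_solut} cleanly but would still need that weak-$*$ analysis afterwards to recover Theorem~\ref{thm:convergence_solutions_regularised_problem}.
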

\begin{proof}
    Step 1: Approximating problem and a-priori estimates.

    By introducing the differentiable convex functional   
    \begin{equation*}
        J_{p,\varepsilon}^s(w)=\frac{1}{p}\int_{\R^d}{|D^s w|^p}\,dx+ \int_\Omega{[\lambda_+G_\varepsilon(w-v) +\lambda_-G_\varepsilon(v-w)]}\,dx-\int_\Omega{fw}\,dx,
    \end{equation*}
    we easily see that the unique minimising solution $u_\varepsilon$ of $J_{p,\varepsilon}^s$ in $\Lambda^{s,p}_0(\Omega)$ satisfies a.e. the equation \eqref{eq:regularised_problem}. Indeed this equation in variational form corresponds to the Euler equation of the regularised minimum problem, firstly in the duality $\langle \cdot, \cdot \rangle_{\Lambda^{-s,p'}(\Omega)\times \Lambda^{s,p}_0(\Omega)}$. As a consequence, by the assumption on $f$ and the Sobolev inbeddings, we have $\int_\Omega{fw}\,dx=\langle f, w\rangle_{\Lambda^{-s,p'}(\Omega)\times \Lambda^{s,p}_0(\Omega)}$ for all $w\in \Lambda^{s,p}_0(\Omega)$, and the equation \eqref{eq:regularised_problem} also holds a.e. in $\Omega$.
 
    Now we observe that $u_\varepsilon$ is uniformly bounded in $\varepsilon >0$. Indeed, denoting 
    \begin{equation}\label{eq:def_zeta_epsilon}
        \zeta_\varepsilon(x)=\lambda_+(x)H_\varepsilon(u_\varepsilon-v)(x)-\lambda_-(x) H_\varepsilon(v-u_\varepsilon)(x),
    \end{equation}
    then we have
    \begin{equation*}
        -\lambda_-(x)\leq \zeta_\varepsilon(x)\leq \lambda_+(x) \mbox{ for a.e. } x\in\Omega,
    \end{equation*}
    which together with $f\in L^{p^\#_s}(\Omega)$, yields $-\Delta^s_p u_\varepsilon=\zeta_\varepsilon-f\in  L^{p^\#_s}(\Omega)$ uniformly for $\varepsilon>0$. Multiplying both sides of the previous identity with $u_\varepsilon$, extended by zero in $\R^d$ and applying the formula \eqref{eq:frac_integration_by_parts}, with the help of Proposition \ref{prop:sobolev_embeddings_lions_calderon} and the fractional Sobolev's inequality \eqref{eq:sobolev_inequality}, we obtain
    \begin{equation}\label{eq:estimate_for_frac_grad_regularised_solution}
        \|D^s u_\varepsilon\|^{p-1}_{L^p(\R^d;\R^d)}\leq C(\Omega,p,s)(\|\lambda_+\|_{L^q(\Omega)}+\|\lambda_-\|_{L^q(\Omega)}+\|f\|_{L^{p^\#_s}(\Omega)}).
    \end{equation}
    From Proposition \ref{prop:fractional_rellich_kondrachov}, as $\varepsilon\to 0$, we can extract a subsequence from $u_\varepsilon$ such that
    \begin{equation*}
        u_\varepsilon\to u\mbox{ in } L^p(\Omega)\quad\mbox{ and }\quad D^s u_\varepsilon\rightharpoonup D^s u \mbox{ in } L^p(\R^d;\R^d)
    \end{equation*}
    for some function $u\in \Lambda^{s,p}_0(\Omega)$.

    \vspace{2mm}

    Step 2: Characterization of the limit.

    We are going to show that there exists a function $\zeta\in L^q(\Omega)$ such that $\zeta=f+\Delta^s_p u$. Indeed, we notice that the pointwise estimates $0\leq H_\varepsilon(t)\leq 1$, for all $t\in\R$, together with weak-$*$ compactness of closed balls in $L^\infty$ (see, for instance, Theorem 3.16 and page 101 of \cite{brezis2011functional}) implies the existence of functions $\chi_{{}_{>}},\chi_{{}_{<}}\in L^\infty(\Omega)$, with $0\leq \chi_{{}_{>}}(x), \chi_{{}_{<}}(x)\leq 1$ for a.e. $x\in\Omega$, such that
    \begin{equation}\label{eq:weak_star_convergence}
        H_\varepsilon(u_\varepsilon-v)\overset{\ast}{\rightharpoonup} \chi_{{}_{>}} \mbox{ and } H_\varepsilon(v-u_\varepsilon)\overset{\ast}{\rightharpoonup} \chi_{{}_{<}} \mbox{ in } L^\infty(\Omega).
    \end{equation}
    Observe that $\chi_{{}_{>}}=0$ almost everywhere in $\{u<v\}$ because
    \begin{equation*}
        0=\lim_{\varepsilon\to 0}{\int_{\Omega}{H_\varepsilon(u_\varepsilon-v)(u_\varepsilon-v)^-}\,dx}=\int_{\Omega}{\chi_{{}_{>}} (u-v)^-}\,dx=\int_{\{u<v\}}{\chi_{{}_{>}} (u-v)}\,dx.
    \end{equation*}
    In addition, $\chi_{{}_{>}}=1$ almost everywhere in $\{u>v\}$ since
    \begin{align*}
        0
        &=\lim_{\varepsilon\to 0}{\int_{\{v<u_\varepsilon<v+\varepsilon\}}{(1-\frac{u_\varepsilon-v}{\varepsilon})(u_\varepsilon-v)}\,dx}\\
        &=\lim_{\varepsilon\to 0}{\int_{\Omega}{(1-H_\varepsilon(u_\varepsilon-v))(u_\varepsilon-v)^+}\,dx}
        =\int_{\Omega}{(1-\chi_{{}_{>}}) (u-v)^+}\,dx\\
        &=\int_{\{u>v\}}{(1-\chi_{{}_{>}}) (u-v)}\,dx
    \end{align*}
    A similar argument can be used to prove that $\chi_{{}_{<}}=0$ almost everywhere in $\{u>v\}$ and $\chi_{{}_{<}}=1$ almost everywhere in $\{u<v\}$. Consequently,
    \begin{equation*}
        \chi_{\{u>v\}}\leq \chi_{{}_{>}}\leq \chi_{\{u\geq v\}}, \quad \chi_{\{u<v\}}\leq \chi_{{}_{<}}\leq \chi_{\{u\leq v\}}
    \end{equation*}
    and, since $q>p^\#_s$,
    \begin{equation*}
        \zeta_\varepsilon\rightharpoonup\zeta\mbox{ in } L^q(\Omega) \mbox{ weak and also in } \Lambda^{-s,p'}(\Omega) \mbox{ strong },
    \end{equation*}
    where
    \begin{equation}\label{eq:def_zeta}
        \zeta=\lambda_+\chi_{{}_{>}}-\lambda_-\chi_{{}_{<}}.
    \end{equation}
    Moreover, by using the weak formulation of \eqref{eq:regularised_problem} with $w\in \Lambda^{s,p}_0(\Omega)$,
    \begin{equation*}
        \int_{\R^d}{|D^s u_\varepsilon|^{p-2}D^s u_\varepsilon\cdot D^s w}\,dx+\int_\Omega{\zeta_\varepsilon w}\,dx=\int_\Omega{fw}\,dx.
    \end{equation*}
    and letting $\varepsilon\to 0$, we deduce
    \begin{equation*}
        \int_{\R^d}{|D^s u|^{p-2}D^s u\cdot D^s w}\,dx+\int_\Omega{\zeta w}\,dx=\int_\Omega{fw}\,dx.
    \end{equation*}
    
    In order to show that $\zeta=\zeta_v\in\Psi_v(u)$, we define the functional
    \begin{equation*}
        \Psi^\varepsilon(w)=\int_\Omega{[\lambda_+G_\varepsilon(w-v)+\lambda_-G_\varepsilon(v-w)]}\,dx.
    \end{equation*}
    On the one hand, we have $\lim_{\varepsilon\to 0}{\Psi^\varepsilon(w)}=\Psi_v(w)$,  for all $w\in\Lambda^{s,p}_0(\Omega)$, while on the other we have
    \begin{equation*}
        \liminf_{\varepsilon\to 0}{\Psi^\varepsilon(u_\varepsilon)}\geq\lim_{\delta\to 0}{\Psi^\delta(u)}=\Psi_v(u).
    \end{equation*}
    since $G_\delta(t)\leq G_\varepsilon(t)$ for each $t\in\R$ and $0<\varepsilon<\delta$, and 
    \begin{equation*}
        \liminf_{\varepsilon\to 0}{\Psi^\varepsilon(u_\varepsilon)}\geq \liminf_{\varepsilon\to 0}{\Psi^\delta(u_\varepsilon)}=\Psi^\delta(u).
    \end{equation*}
    Consequently, from the definition of $\zeta_\varepsilon\in\partial\Psi^\varepsilon(u_\varepsilon)=\{(\Psi^\varepsilon)'(u_\varepsilon)\}$, we deduce
    \begin{equation*}
        \Psi_v(w)+\langle \zeta, w-u\rangle=\lim{[\Psi^\varepsilon(w)+\langle\zeta^\varepsilon, w-u_\varepsilon\rangle]}\geq\liminf{\Psi^\varepsilon(u_\varepsilon)}\geq \Psi_v(u)
    \end{equation*}
   for all $w\in\Lambda^{s,p}_0(\Omega)$, implying that $\zeta=\zeta_v\in\partial\Psi_v(u)\subset \Lambda^{-s,p'}(\Omega)$.

    \vspace{2mm}

    Step 3: Strong convergence.

    To conclude, we prove the convergence of $D^s u_\varepsilon$ to $D^s u$ in $L^p(\R^d;\R^d)$. If to the identity \eqref{eq:existence_solution_subdifferential}, we subtract \eqref{eq:regularised_problem}, and use the definitions \eqref{eq:def_zeta_epsilon} and \eqref{eq:def_zeta}, of $\zeta_\varepsilon$ and $\zeta_v$ respectively, then
    \begin{multline*}
        \int_{\R^d}{\left(|D^s u|^{p-2}D^s u-|D^s u_\varepsilon|^{p-2}D^s u_\varepsilon\right)\cdot D^s(u-u_\varepsilon)}\,dx
        =\int_{\Omega}{(\zeta_\varepsilon-\zeta)(u-u_\varepsilon)}\,dx\\
        =\int_\Omega{\lambda_+[H_\varepsilon(u_\varepsilon-v)-\chi_{{}_{>}}](u-u_\varepsilon)}\,dx-\int_\Omega{\lambda_-[H_\varepsilon(v-u_\varepsilon)-\chi_{{}_{<}}](u-u_\varepsilon)}\,dx.
    \end{multline*}
    Since for every $t\geq 0$, $(1-H_\varepsilon(t))t\leq \varepsilon$, and $0\leq \chi_{\{u>v\}}\leq \chi_{{}_{>}}\leq \chi_{\{u\geq v\}}$ a.e. in $\Omega$, we deduce for the positive phase
    \begin{align*}
        &\int_\Omega{\lambda_+[H_\varepsilon(u_\varepsilon-v)-\chi_{{}_{>}}](u-u_\varepsilon)}\,dx\\
        &\leq \int_{\{u>v\}}{\lambda_+[H_\varepsilon(u_\varepsilon-v)-1](u-u_\varepsilon)}\,dx+\int_{\{u=v\}}{\lambda_+[H_\varepsilon(u_\varepsilon-v)-\chi_{{}_{>}}](v-u_\varepsilon)}\,dx\\
        &\leq \int_{\{u>v\}}{\lambda_+[H_\varepsilon(u_\varepsilon-v)-1](v-u_\varepsilon)}\,dx+\int_{\{u=v\}\cap\{u_\varepsilon<v\}}{\lambda_+ \chi_{{}_{>}} (u_\varepsilon-v)}\,dx\\
        &\qquad\qquad+\int_{\{u=v\}\cap\{u_\varepsilon>v\}}{\lambda_+[H_\varepsilon(u_\varepsilon-v)-1](v-u_\varepsilon)}\,dx\\
        &\leq\varepsilon\|\lambda_+\|_{L^1(\Omega)}.
    \end{align*}
    For the negative-phase, we can apply the same reasoning and deduce that 
    \begin{equation*}
        \int_\Omega{\lambda_-[H_\varepsilon(v-u_\varepsilon)-\chi_{{}_{<}}](u-u_\varepsilon)}\,dx\leq\varepsilon\|\lambda_-\|_{L^1(\Omega)}.
    \end{equation*}
    Consequently, for $p\geq2$ we get the uniform estimate
    \begin{multline*}
        \alpha_p \|D^s(u_\varepsilon-u)\|^p_{L^p(\R^d;\R^d)}\leq \int_{\R^d}{\left(|D^s u|^{p-2}D^s u-|D^s u_\varepsilon|^{p-2}D^s u_\varepsilon\right)\cdot D^s(u-u_\varepsilon)}\,dx\\
        \leq\varepsilon(\|\lambda_+\|_{L^1(\Omega)}+\|\lambda_-\|_{L^1(\Omega)}).
    \end{multline*}
    On the other hand, for $1<p<2$, we use the estimate \eqref{eq:estimate_for_frac_grad_regularised_solution} to conclude that there exists a positive constant $C>0$, depending on $s,\Omega,p,\lambda_\pm$ and $f$ such that
    \begin{equation*}
        \|D^s(u_\varepsilon-u)\|^2_{L^p(\R^d;\R^d)}\leq \varepsilon C.
    \end{equation*}
    \end{proof}

As a corollary of this theorem, we obtain the characterization of the solution to the variational inequality \eqref{eq:existence_solution} as the solution of the two-phase problem \eqref{eq:two_phase_equation} together with the characterization of the Lagrange multiplier $\zeta_v$ given by \eqref{eq:characterization_lagrange_multiplier}, as stated in the following theorem. Noted that $\chi_{\{u\geq v\}}=1-\chi_{\{u<v\}}$ and $\chi_{\{u\leq v\}}=1-\chi_{\{u>v\}}$ a.e. in $\Omega$.

\begin{theorem}\label{thm:convergence_solutions_regularised_problem}
    Under the hypothesis of Theorem \ref{thm:exist_and_conv_regula_solut}, there exist functions $(u, \chi_{{}_{>}}, \chi_{{}_{<}})\in \Lambda^{s,p}_0(\Omega)\times L^\infty(\Omega)\times L^\infty(\Omega)$, such that
    \begin{equation}\label{eq:condition_chi}
        \chi_{\{u>v\}}\leq \chi_{{}_{>}}\leq \chi_{\{u\geq v\}}, \quad \chi_{\{u<v\}}\leq \chi_{{}_{<}}\leq \chi_{\{u\leq v\}}\quad \mbox{ a.e. in } \Omega,
    \end{equation}
    that solve the Dirichlet problem
    \begin{equation}\label{eq:two_phase_equation}
        \begin{cases}
            -\Delta^s_p u+\lambda_+\chi_{{}_{>}}-\lambda_-\chi_{{}_{<}}=f &\mbox { in } \Omega\\
            u=0 &\mbox{ on } \R^d\setminus\Omega.
        \end{cases}
    \end{equation}
    Moreover $u$ is the unique solution to \eqref{eq:existence_solution} and 
    \begin{equation}\label{eq:characterization_lagrange_multiplier}
        \zeta_v=\lambda_+\chi_{{}_{>}}-\lambda_-\chi_{{}_{<}}\in L^q(\Omega)
    \end{equation}
    is the unique $\zeta_v\in \partial \Psi_v(u)$.
\end{theorem}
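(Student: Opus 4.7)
The plan is to treat this statement as a corollary that repackages what was established inside the proof of Theorem \ref{thm:exist_and_conv_regula_solut}. Almost no new analytical work is required; the job is to collect the ingredients and settle the uniqueness assertion for $\zeta_v$.

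First, I would invoke Theorem \ref{thm:exist_and_conv_regula_solut} to obtain the unique solution $u$ of \eqref{eq:existence_solution} as the strong $\Lambda^{s,p}_0(\Omega)$-limit of the regularised solutions $u_\varepsilon$, together with the functions $\chi_{{}_{>}}, \chi_{{}_{<}} \in L^\infty(\Omega)$ with values in $[0,1]$ produced as weak-$\ast$ limits of $H_\varepsilon(u_\varepsilon-v)$ and $H_\varepsilon(v-u_\varepsilon)$. Step 2 of that proof already verifies the two-sided sandwich \eqref{eq:condition_chi} by testing $H_\varepsilon(u_\varepsilon-v)$ against $(u_\varepsilon-v)^-$ and $1-H_\varepsilon(u_\varepsilon-v)$ against $(u_\varepsilon-v)^+$ (and analogously for $\chi_{{}_{<}}$), and also gives the weak convergence $\zeta_\varepsilon \rightharpoonup \lambda_+\chi_{{}_{>}}-\lambda_-\chi_{{}_{<}}$ in $L^q(\Omega)$ together with the identity $f+\Delta^s_p u = \lambda_+\chi_{{}_{>}}-\lambda_-\chi_{{}_{<}}$. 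Passing to the limit in the weak form of \eqref{eq:regularised_problem}, using the strong convergence of $D^s u_\varepsilon$ from \eqref{eq:convergence_gradients} to handle the nonlinear term $|D^s u_\varepsilon|^{p-2}D^s u_\varepsilon$, then yields the PDE \eqref{eq:two_phase_equation} distributionally; since all terms belong to $L^q(\Omega)+L^{p^\#_s}(\Omega)$, the equation also holds a.e.\ in $\Omega$, and the claim $\zeta_v \in L^q(\Omega)$ is immediate from the pointwise bound $-\lambda_- \leq \zeta_v \leq \lambda_+$ together with \eqref{eq:nondegeneracy_lambda}.

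Second, for the subdifferential characterization, Step 2 of Theorem \ref{thm:exist_and_conv_regula_solut} already demonstrated $\zeta_v=\lambda_+\chi_{{}_{>}}-\lambda_-\chi_{{}_{<}}\in \partial\Psi_v(u)$ by passing to the limit in the convex inequalities associated with $\zeta_\varepsilon\in\partial\Psi^\varepsilon(u_\varepsilon)$, exploiting the monotonicity $G_\delta\leq G_\varepsilon$ for $\varepsilon<\delta$. I would simply cite this.

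For the uniqueness of $\zeta_v$, the key observation is that Proposition \ref{prop:variational_inequality} guarantees uniqueness of $u$ (by strict convexity of $J^s_p$), so $-\Delta^s_p u$ is uniquely determined as an element of $\Lambda^{-s,p'}(\Omega)$, forcing $\zeta_v=f+\Delta^s_p u$ to be the unique selection in $\partial\Psi_v(u)$ compatible with \eqref{eq:existence_solution_subdifferential}. Note that the pair $(\chi_{{}_{>}},\chi_{{}_{<}})$ itself need not be unique on the contact set $\{u=v\}$, but the combination $\lambda_+\chi_{{}_{>}}-\lambda_-\chi_{{}_{<}}$ is. I do not foresee any real obstacle: every nontrivial step — the $L^\infty$ weak-$\ast$ compactness argument, the identification of the limits on $\{u\lessgtr v\}$, the strong convergence of the gradients, and the inclusion in the subdifferential — has already been carried out inside Theorem \ref{thm:exist_and_conv_regula_solut}, so the proof reduces to a short assembly together with the uniqueness argument above.
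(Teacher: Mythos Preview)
Your proposal is correct and matches the paper's own proof, which is a one-line reference to the limit characterization in Step~2 of Theorem~\ref{thm:exist_and_conv_regula_solut}. If anything, you are more explicit than the paper: you spell out the uniqueness of $\zeta_v$ via the identity $\zeta_v=f+\Delta^s_p u$ and the uniqueness of $u$, and you note that the strong convergence \eqref{eq:convergence_gradients} is what justifies passing to the limit in the nonlinear term.
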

\begin{proof}
    Letting $\varepsilon\to 0$ in the regularized problem \eqref{eq:regularised_problem}, the characterization of the limits in step 2 of the proof of Theorem \ref{thm:exist_and_conv_regula_solut} yields the conclusion.
\end{proof}
\begin{remark}
    Although $u=u_s$, $0<s\leq1$, and $\zeta_v$ are uniquely defined, the functions $\chi_{{}_{>}}, \chi_{{}_{<}}$ do not need to be unique, nor the characteristic functions of the sets $\{u_s>v\}$ and $\{u_s<v\}$, respectively. This is a consequence of the fact that, in general, $\Xi^s_v=\{u_s=v\}$ may have a positive Lebesgue measure $\mathcal{L}^d(\Xi^s_v)> 0$ and $\chi_{{}_{>}}$ and $\chi_{{}_{<}}$ may take any values between $0$ and $1$ in that set $\Xi^s_v$. In case when the inter-phases has zero Lebesgue measure, i.e. if
    \begin{equation}\label{eq:nondegeneracy_condition_v}
        \mathcal{L}^d(\Xi^s_v)=\text{meas}(\{u_s=v\})=0
    \end{equation}
    then we may conclude
    \begin{equation}
        \chi_{\{u_s>v\}}=\chi_{{}_{>}}=\chi_{\{u_s\geq v\}}, \quad \chi_{\{u_s<v\}}=\chi_{{}_{<}}=\chi_{\{u_s\leq v\}}\quad \mbox{ a.e. in } \Omega,
    \end{equation}
    and we have the uniqueness of the functions $\chi_{{}_{>}}$ and  $\chi_{{}_{<}}$.
\end{remark}

\begin{remark}
    The nondegeneracy condition \eqref{eq:nondegeneracy_condition_v} is quite difficult to guarantee and is related to the regularity of the solution $u$ and to the assumptions on the data. In the fractional case $0<s<1$, nothing is known for $p\neq 2$, while when $p=2$, $f\in L^\infty(\Omega)$ and $\Omega$ is $C^{1,1}$, a direct application of \cite[Proposition 1.1 and Theorem 1.2]{oton2014boundary} implies that $u\in C^s(\R^d)$ and $u/d^\alpha\in C^\alpha(\overline{\Omega})$ for some $\alpha\in(0,1)$. These results can be extended to operators of the form $-\mathrm{div}_s(A D^s u)$, with $A:\R^d\to \R^{d\times d}$  sufficiently smooth and strongly elliptic, by a simple application of \cite[Corollary 6.11]{abels2023fractional}. However this is not sufficient and it is an open problem to give examples implying \eqref{eq:nondegeneracy_condition_v} in the fractional case $0<s<1$, even in the case of the fractional Laplacian.
\end{remark}
  

\begin{remark}\label{nondegenerate}
    In the classical case in which $s=1$ and $v=0$, the nondegeneracy condition \eqref{eq:nondegeneracy_condition_v} can be guaranteed for the $p$-Laplacian under additional regularity assumptions on $f$ and $\lambda_{\pm}$, for instance if they are in $W^{1,p'}_\text{loc}(\Omega)$, combined with the restriction
    \begin{equation}\label{eq:hypothesis_f_lambda}
        \lambda_+(x)<f(x)\mbox{ or } f(x)<-\lambda_-(x) \mbox{ a.e. } x\in\Omega.
    \end{equation}
    In fact, by the local classical regularity theory for the $p$-Laplacian, which can be found in \cite[Theorem 8.1]{giusti2003direct}, \cite[Theorem 6.5]{ladyzhenskaya1968linear}, or \cite{fuchs1990holder, tolksdorf1984Regularity}, one has $\Delta_p u=0 \mbox{ a.e. in } \{u=0\}$, if $\mathcal{L}^d(\Xi^1_0)>0$ holds we obtain a contradiction with the condition \eqref{eq:hypothesis_f_lambda}, as it was observed in Section 3 of \cite{rodrigues2016free}. In fact, if the function $\lambda_+ +\lambda_-\in C(\Omega)$, locally in $\omega=\{\lambda_+ +\lambda_->0\}$, in the nondegenerate case $\mathcal{L}^d(\Xi^1_0\cap\omega)=0$, the free boundary is, up to a set of a null perimeter, the union of an at most countable family of $C^1$-surfaces \cite[Theorem 4.2]{rodrigues2016free}.

    In the linear case, with $p=2$ and $v$ such that $\Delta v\in L^\infty(\Omega)$, which implies that the solution $u\in W^{2,r}_\text{loc}(\Omega)$, for all $r<\infty $, and $f+\Delta v$ satisfies the nondegeneracy condition
    \begin{equation}\label{eq:sufficient_condition_nondegeneracy_with_translation}
        \lambda_+(x)<f(x)+\Delta v(x)\mbox{ or } f(x)+\Delta v(x)<-\lambda_-(x) \mbox{ a.e. } x\in\Omega,
    \end{equation}
    a similar reasoning can be applied to conclude that $\mathcal{L}^d(\Xi^1_v)=0$. However, when $s<1$, such arguments are no longer possible, since the fractional operators are nonlocal.
\end{remark} 

\begin{remark}\label{rem:paired_system}
    The approximation and characterisation of the two-phase two membranes problem follows the steps of the previous one membrane case. In fact, considering the minimisation of the vector functional
    \begin{equation}
        \begin{split}
            \mathcal{J}^s(w,z)=\frac{1}{p}\int_{\R^d}&{(|D^s w|^p+|D^s z|^p)}\,dx\\
            &+\int_\Omega{[\lambda_+(w-z)^+ +\lambda_-(w-z)^- -fw-gz]}\,dx,
        \end{split}
    \end{equation}
    in $\Lambda^{s,p}_0(\Omega)\times\Lambda^{s,p}_0(\Omega)$, with $(f,g)\in L^{p^\#_s}(\Omega)\times L^{p^\#_s}(\Omega)$ and $\lambda_\pm$ satisfying \eqref{eq:nondegeneracy_lambda}, similarly we obtain the unique solution $(u,v)$ to the system
    \begin{equation}
        \begin{cases}
            -\Delta^s_p u+\lambda_+\chi_{{}_{>}}-\lambda_-\chi_{{}_{<}}=f &\mbox { in } \Omega\\
            -\Delta^s_p v+\lambda_-\chi_{{}_{<}}-\lambda_+\chi_{{}_{>}}=g &\mbox { in } \Omega\\
            u=0 \mbox{ and } v=0 &\mbox{ on } \R^d\setminus\Omega,
        \end{cases}
    \end{equation}
    with $\chi_{{}_{>}}=\chi_{{}_{>}}[u,v]$ and $\chi_{{}_{<}}=\chi_{{}_{<}}[u,v]$ satisfying \eqref{eq:condition_chi}. The approximation by $(u_\varepsilon,v_\varepsilon)$ can also be done with the corresponding regularised system \eqref{eq:regularised_problem} with the equation for $v_\varepsilon$ with $g$ and the $\lambda_\pm H_\varepsilon(\pm\,\cdot)$ exchanged. The estimate \eqref{eq:convergence_gradients} also holds for $v_\varepsilon$ exactly with the same constant $C$. Although $\chi_{{}_{>}}$ and $\chi_{{}_{<}}$ are, in general also non unique, the Lagrange multiplier $\zeta=\lambda_+\chi_{{}_{>}}-\lambda_-\chi_{{}_{<}}$ is also unique.
\end{remark}

\section{Stability of solutions as $s\nearrow 1$}\label{sec:convergence_fractional_to_classical}
As it was mentioned at the end of previous Section, in general $\chi^s_{{}_{>v}}$ and $\chi^s_{{}_{<v}}$ are not characteristic functions. However, in this section we prove that they have a weak continuous dependence property, when we take $s\nearrow 1$, towards their limits, which in fact holds in the strong topology if they are the characteristic functions in the limit. This happens in the case $s=1$, provided its inter-phases has measure zero,
\begin{equation}\label{eq:nodegeneracy_limit_case}
    \mathcal{L}^d(\Xi^1_v)=\text{meas}(\{u=v\})=0.
\end{equation}

\begin{theorem}\label{thm:convergence_measure_fro_frac_to_cla}
    Let $s\to 1$,  $0\leq \lambda^s_\pm, f_s \in L^{p'}(\Omega)$ and $v_s\in L^p(\Omega)$ be such that $f_s\to f$ and $\lambda^s_\pm\to\lambda_\pm$, both in $L^{p'}(\Omega)$, and  $v_s\to v$ in $L^p(\Omega)$, for  $p'=p/(p-1)>p^\#_1$.  Let $(u_s,\chi^s_{{}_{>}}[v_s], \chi^s_{{}_{<}}[v_s])$ be a solution to the two-phase problem \eqref{eq:two_phase_equation} for each $s$, $0<\sigma<s<1$. Then, for any $0\leq t<1$, there exists a subsequence $s\to 1$, 
    \begin{equation}\label{eq:convergence_u_s}u_s\to u \mbox{ in } \Lambda^{t,p}_0(\Omega),\quad D^su_s\to Du \mbox{ in } L^p(\R^d;\R^d),
    \end{equation}
    and
    \begin{equation}\label{eq:convergence_quasi_characteristics}
        \chi^s_{{}_{>}}[v_s]\overset{\ast}{\rightharpoonup}\chi^1_{{}_{>}}[v] \quad\mbox{ and }\quad \chi^s_{{}_{<}}[v_s]\overset{\ast}{\rightharpoonup}\chi^1_{{}_{<}}[v] \quad\mbox{ in } L^\infty(\Omega), 
    \end{equation}
    where $(u,\chi^1_{{}_{>}}[v], \chi^1_{{}_{<}}[v])\in W^{1,p}_0(\Omega)\times L^\infty(\Omega)\times L^\infty(\Omega)$ is a solution to \eqref{eq:two_phase_equation} associated to $(f, \lambda_\pm, v)$ with $s=1$, i.e. with the $p$-Laplacian.
    In addition, if we assume also that \eqref{eq:nodegeneracy_limit_case} holds, then
    \begin{equation}\label{eq:convergence_quasi_characteristics}
        \chi^s_{{}_{>}}[v_s]\to\chi_{\{u>v\}} \mbox{ and } \chi^s_{{}_{<}}[v_s]\to\chi_{\{u<v\}} \mbox{ strongly in } L^r(\Omega), r<\infty, 
    \end{equation}
    and the whole sequence $(u_s,\chi^s_{{}_{>}}[v_s], \chi^s_{{}_{<}}[v_s])\to (u,\chi_{\{u>v\}}, \chi_{\{u<v\}})$ as $s\to 1$.
\end{theorem}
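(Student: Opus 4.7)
I extract a weakly convergent subsequence of $(u_s,\chi^s_{{}_{>}},\chi^s_{{}_{<}})$, identify the pointwise constraints on the weak-$*$ limits of the quasi-characteristics, pass to the limit in the equation via a Minty-type monotonicity argument, and finally upgrade the gradient convergence to strong convergence by matching energies. The main obstacle is the Minty step, since a priori only weak convergence of the gradients is available, and the nonlinear term must be identified before any strong convergence can be established.

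\textbf{Compactness, weak limits, and constraints on the $\chi$'s.} Testing the fractional equation for $u_s$ with $u_s$ itself and using $|\zeta^s_v|\leq\lambda^s_++\lambda^s_-$ together with Proposition \ref{prop:fractional_poincare_inequality} (whose constant $C_P/s$ stays bounded because $s>\sigma>0$), I obtain $\|D^s u_s\|_{L^p(\R^d;\R^d)}\leq C$. Proposition \ref{prop:compactness_ala_bellido} provides a subsequence with $u_s\to u$ in $\Lambda^{t,p}_0(\Omega)$ for every $0\leq t<1$ and $D^s u_s\rightharpoonup Du$ weakly in $L^p(\R^d;\R^d)$. Weak-$*$ compactness of the unit ball of $L^\infty(\Omega)$ yields further subsequences $\chi^s_{{}_{>}}\overset{\ast}{\rightharpoonup}\chi_{{}_{>}}$ and $\chi^s_{{}_{<}}\overset{\ast}{\rightharpoonup}\chi_{{}_{<}}$; combined with the strong convergence $\lambda^s_\pm\to\lambda_\pm$ in $L^{p'}$, this gives $\zeta^s_v\rightharpoonup\zeta_v:=\lambda_+\chi_{{}_{>}}-\lambda_-\chi_{{}_{<}}$ weakly in $L^{p'}(\Omega)$, and the bounded sequence $|D^s u_s|^{p-2}D^s u_s$ admits a weak-$L^{p'}$ limit $G$. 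To see $\chi_{\{u>v\}}\leq\chi_{{}_{>}}\leq\chi_{\{u\geq v\}}$, pass to the limit in the identities $\chi^s_{{}_{>}}(u_s-v_s)^-=0$ and $(1-\chi^s_{{}_{>}})(u_s-v_s)^+=0$ (both consequences of \eqref{eq:condition_chi} applied to $u_s$) using strong $L^p$-convergence of $(u_s-v_s)^\pm$ and weak-$*$ convergence of $\chi^s_{{}_{>}}$; the argument for $\chi_{{}_{<}}$ is symmetric.

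\textbf{Minty identification of the limit equation.} For $\varphi\in C_c^\infty(\Omega)$, $D^s\varphi=D(I_{1-s}\varphi)\to D\varphi$ strongly in $L^p$, so passing to the limit in the weak form gives $\int_{\R^d}G\cdot D\varphi\,dx=\int_\Omega(f-\zeta_v)\varphi\,dx$, which extends by density to every $\varphi\in W^{1,p}_0(\Omega)$. Applying Proposition \ref{prop:lower_estimate_p_laplacian}, for each $w\in W^{1,p}_0(\Omega)$,
\[
\int_{\R^d}\bigl(|D^s u_s|^{p-2}D^s u_s-|D^s w|^{p-2}D^s w\bigr)\cdot(D^s u_s-D^s w)\,dx\geq 0.
\]
Expanding the product and passing to the limit — using the energy identity $\int|D^s u_s|^p\,dx=\int(f_s-\zeta^s_v)u_s\,dx\to\int(f-\zeta_v)u\,dx=\int G\cdot Du\,dx$ (the last equality from the limit weak form tested with $u$), together with $D^s w\to Dw$ strongly in $L^p$ and $|D^s w|^{p-2}D^s w\to|Dw|^{p-2}Dw$ strongly in $L^{p'}$ — one obtains
\[
\int_{\R^d}\bigl(G-|Dw|^{p-2}Dw\bigr)\cdot(Du-Dw)\,dx\geq 0\quad\forall w\in W^{1,p}_0(\Omega).
\]
The classical Minty trick (take $w=u-t\varphi$, divide by $t>0$, and let $t\to 0^\pm$) yields $\int(G-|Du|^{p-2}Du)\cdot D\varphi\,dx=0$ for every $\varphi\in W^{1,p}_0(\Omega)$; hence $-\mathrm{div}(|Du|^{p-2}Du)=f-\zeta_v$ in $\mathcal{D}'(\Omega)$, i.e. $u$ together with the already-identified $(\chi_{{}_{>}},\chi_{{}_{<}})$ solves the classical two-phase problem.

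\textbf{Strong convergence of gradients and, under non-degeneracy, of the $\chi$'s.} Testing the limit equation with $u$ gives $\int|Du|^p\,dx=\int(f-\zeta_v)u\,dx$, so the energy identity above upgrades to $\|D^s u_s\|_{L^p}^p\to\|Du\|_{L^p}^p$; combined with $D^s u_s\rightharpoonup Du$ and the uniform convexity of $L^p(\R^d;\R^d)$ for $1<p<\infty$, this yields the strong convergence $D^s u_s\to Du$ in $L^p$. Under the non-degeneracy condition \eqref{eq:nodegeneracy_limit_case}, $\chi_{\{u>v\}}=\chi_{\{u\geq v\}}$ a.e., hence $\chi_{{}_{>}}=\chi_{\{u>v\}}$ is uniquely determined (and analogously for $\chi_{{}_{<}}$), so the whole sequence converges. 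For strong $L^r$-convergence of $\chi^s_{{}_{>}}$, $r<\infty$, using $(\chi^s_{{}_{>}})^2\leq\chi^s_{{}_{>}}$,
\[
\int_\Omega(\chi^s_{{}_{>}}-\chi_{\{u>v\}})^2\,dx\leq\int_\Omega\chi^s_{{}_{>}}\,dx-2\int_\Omega\chi^s_{{}_{>}}\chi_{\{u>v\}}\,dx+\int_\Omega\chi_{\{u>v\}}\,dx,
\]
and weak-$*$ convergence sends the first two integrals both to $\int\chi_{\{u>v\}}\,dx$, so the right-hand side vanishes. This gives $L^2$-convergence and, by the $L^\infty$-bound, $L^r$-convergence for every $r<\infty$; the same argument handles $\chi^s_{{}_{<}}$.
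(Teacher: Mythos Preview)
Your proof is correct and follows the same overall strategy as the paper: uniform a-priori bounds, compactness via Proposition~\ref{prop:compactness_ala_bellido}, weak-$*$ limits of the quasi-characteristics with the pointwise constraints read off from the identities involving $(u_s-v_s)^\pm$, a Minty-type identification of the limit equation, and the non-degeneracy upgrade. The execution differs only in minor technical choices: the paper sets up the Minty step by testing the equation with $w-u_s$ directly (avoiding your auxiliary weak limit $G$ and the energy identity), and it obtains strong gradient convergence by subtracting the limit and approximate equations and invoking Proposition~\ref{prop:lower_estimate_p_laplacian}, whereas you use norm convergence plus uniform convexity of $L^p$; both routes are standard and equally valid.
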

\begin{proof}
    Since $f_s\to f$ and $\lambda^s_\pm\to\lambda_\pm$ in a $L^{p'}(\Omega)$, and
    \begin{equation*}
        -\lambda^s_-\leq \zeta_s=\lambda^s_+\chi^s_{{}_{>}}[v_s]- \lambda^s_-\chi^s_{{}_{<}}[v_s]\leq \lambda^s_+ \quad \mbox{ a.e. in } \Omega,
    \end{equation*}
    then, using the Poincaré inequality, there exists a constant $M$, depending on the sequences $f_s$ and $\lambda^s_\pm$, but independent of $s>\sigma$, such that
    \begin{equation}\label{eq:estimate_for_frac_grad_regularised_solution}
        \|D^s u_s\|^{p-1}_{L^p(\R^d;\R^d)}\leq \frac{C(\Omega,p)}{s}M < \frac{C(\Omega,p)}{\sigma}M.
    \end{equation}
    From Proposition \ref{prop:compactness_ala_bellido}, we have that $u_s\to u$ in $\Lambda^{t,p}_0(\Omega)$, for $0\leq t<\sigma$, and $D^s u_s\rightharpoonup Du$ in $L^p(\R^d;\R^d)$, for some $u\in W^{1,p}_0(\Omega)$.
    
    Since $0\leq \chi^s_{{}_{>}}[v_s], \chi^s_{{}_{<}}[v_s]\leq 1$, we get from Banach-Allaoglu's theorem that for some subsequence of $s$, there exist functions $\chi^1_{{}_{>}}[v], \chi^1_{{}_{<}}[v]$ such that
    \begin{equation*}
        \chi^s_{{}_{>}}[v_s]\overset{\ast}{\rightharpoonup} \chi^1_{{}_{>}}[v] \mbox{ and } \chi^s_{{}_{<}}[v_s]\overset{\ast}{\rightharpoonup} \chi^1_{{}_{<}}[v] \mbox{ in } L^\infty(\Omega).
    \end{equation*}
    Moreover, since $\chi^s_{{}_{>}}[v_s]=0$ whenever $u_s<v_s$, then
    \begin{equation*}
        0=\int_\Omega{(u_s-v_s)^-\chi^s_{{}_{>}}[v_s]}\,dx\to \int_\Omega{(u-v)^-\chi^1_{{}_{>}}[v]}\,dx
    \end{equation*}
    yielding $0\leq \chi^1_{{}_{>}}[v]\leq \chi_{\{u\geq v\}} \mbox{ a.e. in } \Omega$.

    In addition, if we set $\mathcal{O}\subset\Omega$ to be an arbitrary open set, then 
    \begin{equation*}
        \int_\mathcal{O}{(u_s-v_s)^+ \chi^s_{{}_{>}}[v_s]}\,dx=\int_\mathcal{O}{(u_s-v_s)^+}\,dx\to\int_\mathcal{O}{(u-v)^+}\,dx
    \end{equation*}
    because $\chi^s_{{}_{>}}[v_s]=1$ whenever $u_s>v_s$.
    On the other hand $u_s^+\to u^+$ in $L^p(\Omega)$, which yields
    \begin{equation*}
        \int_\mathcal{O}{(u-v)^+}\,dx=\lim_{s\nearrow 1}{\int_\mathcal{O}{(u_s-v_s)^+\chi^s_{{}_{>}}[v_s]}\,dx}=\int_\mathcal{O}{(u-v)^+\chi_{{}_{>}}^1[v]}\,dx.
    \end{equation*}
    As a consequence, $\chi^1_{{}_{>}}[v]=1$ a.e. in $\{u>v\}$, and so $0\leq \chi_{\{u>v\}}\leq \chi^1_{{}_{>}}[v]$. Similarly, we conclude $0\leq \chi_{\{u<v\}}\leq \chi^1_{{}_{<}}[v]\leq \chi_{\{u\leq v\}}$.
    
    Consider now a sequence $w_s\to w$ in $L^p(\Omega)$. Since $\lambda^s_\pm\to\lambda_\pm$ in $L^{p'}(\Omega)$ and $v_s\to v$ in $L^p(\Omega)$, then
    \begin{multline}\label{eq:convergence_psi_s}
        \lim_{s\to 1}{\Psi^s_{v_s}(w_s)}
        =\lim_{s\to 1}{\int_\Omega{(\lambda^s_+(w_s-v_s)^+ +\lambda^s_-(w_s-v_s)^-)}\,dx}\\
        =\int_\Omega{(\lambda_+(w-v)^+ +\lambda_-(w-v)^-)}\,dx
        =\Psi_v(w).
    \end{multline}
    As a simple consequence of this limit, denoting $\zeta_1=\lambda_+\chi^1_{{}_{>}}[v]- \lambda_-\chi^1_{{}_{<}}[v]$ be the weak limit in $L^{p'}(\Omega)$ of $\zeta_s$ and using the fact that $\zeta_s\in\partial\Psi^s_{v_s}(u_s)$, we get
    \begin{multline*}
        \Psi_v(w)+\int_{\Omega}{\zeta_1(w-u)}\,dx
        =\lim_{s\to 1}{\left[\Psi^s_{v_s}(w)+\int_{\Omega}{\zeta_s(w-u_s)}\,dx\right]}\\
        \geq\lim_{s\to 1}{\Psi^s_{v_s}(u_s)}=\Psi_v(u),\quad\forall w\in W^{1,p}_0(\Omega),
    \end{multline*}
    which implies that $\zeta_1\in\partial\Psi^1_v(u)\subset W^{-1,p'}(\Omega)$. Now, by the monotonicity of $-\Delta^s_p$, we have
    \begin{multline*}
        \int_{\R^d}{|D^s w|^{p-2}D^s w\cdot D^s (w-u_s)}\,dx\geq\\
        \int_{\R^d}{|D^s u^s|^{p-2}D^s u^s\cdot D^s (w-u_s)}\,dx
        =\int_\Omega{(f_s-\zeta_s )(w-u_s)}\,dx.
    \end{multline*}
    Letting $s\nearrow 0$,  since for any $w\in W^{1,p}_0(\Omega)$, $D^sw\to Dw$ in $L^p(\R^d;\R^d)$, we deduce
    \begin{equation*}
        \int_{\Omega}{|Dw|^{p-2}D w\cdot D (w-u)}\,dx\geq \int_\Omega{(f-\zeta_1)(w-u)}\,dx, \quad\forall w\in W^{1,p}_0(\Omega).
    \end{equation*}
    Choosing $w=u\pm \delta z$, for arbitrary $z\in W^{1,p}_0(\Omega)$ and letting $\delta\to 0$, it follows that $(u,\chi^1_{{}_{>}}[v], \chi^1_{{}_{<}}[v])$ is a solution of 
    \eqref{eq:two_phase_equation} associated to $(f, \lambda_\pm, v)$.
    
    By subtracting the equation $-\Delta_p(u)=f-\zeta_1$ to $-\Delta^s_p(u)=f_s-\zeta_s$, and using the fact that $\zeta_s\rightharpoonup\zeta_1$ and $f_s\to f$ in $L^{p'}(\Omega)$ and $u_s\to u$ in $L^p(\Omega)$, we obtain
    \begin{multline*}
        \lim_{s\to 1}{\int_{\R^d}{\left(|D u|^{p-2}D u-|D^s u_s|^{p-2}D^s u_s\right)\cdot (Du-D^su_s)}\,dx}\\
        =\lim_{s\to 1}{\left[\int_{\Omega}{(f-f_s)(u-u_s)}\,dx+\int_{\Omega}{(\zeta_s-\zeta_1)(u-u_s)}\,dx\right]}=0
    \end{multline*}
    From, Proposition \ref{prop:lower_estimate_p_laplacian} follows that $D^s u_s\to Du$ in $L^p(\R^d;\R^d)$ as $s\to 1$.

    Assuming now the nondegeneracy hypothesis $\mathcal{L}^n(\Xi^1_v)=0$, we have
    \begin{equation*}
        \chi^1_{{}_{>}}[v]=\chi_{\{u>v\}}=\chi_{\{u\geq v\}} \quad \mbox { and } \quad \chi^1_{{}_{<}}[v]=\chi_{\{u<v\}}=\chi_{\{u\leq v\}}.
    \end{equation*}
    To get the strong convergence of the phases in $L^r(\Omega)$ , we make use of the weak convergence to get 
    \begin{multline*}
        \int_\Omega{\chi_{\{u>v\}}}\,dx=\lim_{s\to 1}{\int_\Omega{\chi^s_{{}_{>}}[v_s]}\,dx}\geq \limsup_{s\to 1}{\int_\Omega{|\chi^s_{{}_{>}}[v_s]|^q}\,dx}\\
        \geq \liminf_{s\to 1}{\int_\Omega{|\chi^s_{{}_{>}}[v_s]|^q}\,dx}\geq \int_\Omega{|\chi_{\{u>v\}}|^q}\,dx= \int_\Omega{\chi_{\{u>v\}}}\,dx
    \end{multline*}
    Using similar arguments we also have that $\chi^1_{{}_{<}}[v]=\chi_{\{u<v\}}=\chi_{\{u\leq v\}}$ a.e. and the convergence of $\chi^s_{{}_{<}}[v_s]$ to $\chi_{\{u<v\}}$ in $L^r(\Omega)$ with $r<\infty$.
\end{proof}

\begin{corollary}
    Assume the same hypothesis as Theorem \ref{thm:convergence_measure_fro_frac_to_cla}. Then,
    \begin{equation*}
        \chi_{\{u_s>v_s\}}\to\chi_{\{u>v\}} \mbox{ and } \chi_{\{u_s<v_s\}}\to\chi_{\{u<v\}} \mbox{ strongly in } L^q(\Omega), q<\infty. 
    \end{equation*}
\end{corollary}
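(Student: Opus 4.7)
The plan is to leverage the strong $L^p$-convergences $u_s\to u$ (from \eqref{eq:convergence_u_s} taken with $t=0$) and $v_s\to v$ (from the hypothesis), together with the nondegeneracy condition \eqref{eq:nodegeneracy_limit_case} that this corollary inherits from the second part of Theorem \ref{thm:convergence_measure_fro_frac_to_cla}, in order to pass to pointwise convergence of the sign of $u_s-v_s$ and then to apply dominated convergence.

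First I would observe that $u_s-v_s\to u-v$ in $L^p(\Omega)$, so along a subsequence (still indexed by $s$) we have $u_s(x)-v_s(x)\to u(x)-v(x)$ for a.e.\ $x\in\Omega$. By \eqref{eq:nodegeneracy_limit_case}, the set $\{u=v\}$ is Lebesgue negligible, so for a.e.\ $x$ the limit $u(x)-v(x)$ is strictly positive or strictly negative. Consequently, for a.e.\ $x\in\{u>v\}$ one has $u_s(x)>v_s(x)$ for all sufficiently large $s$, and symmetrically for $x\in\{u<v\}$. This yields the pointwise a.e.\ convergences
\begin{equation*}
    \chi_{\{u_s>v_s\}}(x)\to \chi_{\{u>v\}}(x) \quad\text{and}\quad \chi_{\{u_s<v_s\}}(x)\to \chi_{\{u<v\}}(x),
\end{equation*}
which, since the integrands are uniformly bounded by $1$ on the bounded set $\Omega$, are upgraded by dominated convergence to convergence in $L^q(\Omega)$ for every $q\in[1,\infty)$.

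To promote this subsequential conclusion to the full sequence, I would invoke the standard two-level argument: from any subsequence of $\{s\}$, one extracts a further subsequence along which the $L^p$-convergences of $u_s$ and $v_s$ again yield pointwise a.e.\ convergence, and hence, by the previous step, $L^q$-convergence of the characteristic functions to the same limits $\chi_{\{u>v\}}$ and $\chi_{\{u<v\}}$. Uniqueness of these sublimits forces the whole sequence to converge. The only subtle point is that the strong convergence asserted in this corollary genuinely relies on the nondegeneracy \eqref{eq:nodegeneracy_limit_case}; on a hypothetical positive-measure portion of $\{u=v\}$ the signs of $u_s-v_s$ could oscillate, which would obstruct any strong $L^q$-limit of $\chi_{\{u_s>v_s\}}$.
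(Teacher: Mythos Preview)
Your argument is correct and takes a different route from the paper's. The paper simply reruns, for the genuine characteristic functions $\chi_{\{u_s>v_s\}}$, the weak-$*$ machinery already used for $\chi^s_{{}_{>}}[v_s]$ in Theorem~\ref{thm:convergence_measure_fro_frac_to_cla}: extract a weak-$*$ limit in $L^\infty(\Omega)$, identify it via the tests against $(u_s-v_s)^\pm$ and the nondegeneracy \eqref{eq:nodegeneracy_limit_case} as $\chi_{\{u>v\}}$, and then upgrade to strong $L^q$ convergence by the norm-convergence trick (using $0\leq\chi\leq1$ and lower semicontinuity). You instead exploit the strong $L^p$ convergence $u_s-v_s\to u-v$ directly, pass to a.e.\ convergence of the sign via \eqref{eq:nodegeneracy_limit_case}, and invoke dominated convergence, with a standard sub-subsequence argument to get the full sequence. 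Your route is more elementary and self-contained, bypassing the weak-limit identification entirely; the paper's route has the advantage of being a literal repetition of an already-proved step. Both correctly rely on \eqref{eq:nodegeneracy_limit_case}, and your closing remark that strong convergence fails without it is accurate.
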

\begin{proof}
    The same argument used to prove \eqref{eq:convergence_quasi_characteristics} can be applied for this proof. Indeed, we just need to apply it to the functions $\chi_{\{u_s>v_s\}}$ and $\chi_{\{u_s<v_s\}}$, instead of $\chi^s_{{}_{>}}[v_s]$ and $\chi^s_{{}_{<}}[v_s]$.
\end{proof}

\begin{remark}
    As a consequence of \eqref{eq:convergence_psi_s}, in the assumptions of this theorem, we can take the convergences of the  $\lambda^s_\pm$ to $\lambda_\pm$ in $L^q(\Omega)$, for any $q$ such that $p'\leq q\leq \infty$, provided we also assume that $v_s\to v$ in $L^{q'}(\Omega)$, in particular we can have $q'=1$ when $q=\infty$. 
\end{remark}

\begin{remark}
    Similarly for the two-phase two-membrane problem of Remark \ref{rem:paired_system}, when $(f_s,g_s)\to(f,g)$ in $L^{p'}(\Omega)\times L^{p'}(\Omega)$ the conclusions of Theorem \ref{thm:convergence_measure_fro_frac_to_cla} also hold, in particular we have also the convergence $(u_s,v_s,\chi^s_{{}_{>}}[u_s,v_s], \chi^s_{{}_{<}}[u_s,v_s])\to (u,v,\chi^1_{{}_{>}}[u,v], \chi^1_{{}_{<}}[u,v]))$ as $s\to 1$.
\end{remark}

\section{Existence of solutions to the quasi-variational two-phase problem}\label{sec:quasi-variational_problem}
To study the existence of solutions to the quasi-variational inequality \eqref{eq:quasi-variational_two_phase_equation}, first we need to investigate the continuous dependence of the solutions of \eqref{eq:two_phase_equation} with respect to $v$.

\begin{proposition}[Continuous dependence with respect to $v$]\label{prop:continuous_dependence_v}
    Let $0<s\leq 1$, $f\in L^{p^\#_s}(\Omega)$, $\lambda_{\pm}\in L^q(\Omega)$ with $\lambda_\pm\geq 0$ a.e. $x\in\Omega$, and a sequence of functions $v_n\to v$ in $L^{q'}(\Omega)$ as $n\to \infty$, with $q$ and $q'$ as in \eqref{eq:v_in_q_prime} and \eqref{eq:nondegeneracy_lambda}. If we denote by $(u_n, \chi_{{}_{>}}[v_n], \chi_{{}_{<}}[v_n])$ the solution of \eqref{eq:existence_solution_subdifferential} associated to $v_n$ and $(u, \chi_{{}_{>}}, \chi_{{}_{<}})$ the solution of \eqref{eq:existence_solution_subdifferential} associated to $v$, then
    \begin{equation*}
        \chi_{{}_{>}}[v_n]\overset{\ast}{\rightharpoonup} \chi_{{}_{>}}, \chi_{{}_{<}}[v_n]\overset{\ast}{\rightharpoonup} \chi_{{}_{<}} \mbox{ in } L^\infty(\Omega)
    \end{equation*}
    and
    \begin{equation*}
        u_n\to u \mbox{ in } \Lambda^{s,p}_0(\Omega).
    \end{equation*}
\end{proposition}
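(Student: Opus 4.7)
The plan is to extract weak and weak-$*$ limits along a subsequence, identify them with a solution of the two-phase problem for datum $v$, use uniqueness from Proposition \ref{prop:variational_inequality} to match the limit with $(u,\chi_{{}_{>}},\chi_{{}_{<}})$, and finally upgrade weak convergence of $u_n$ to strong convergence via Proposition \ref{prop:lower_estimate_p_laplacian}. Setting $\zeta_n = \lambda_+\chi_{{}_{>}}[v_n] - \lambda_-\chi_{{}_{<}}[v_n]$, the pointwise bound $-\lambda_- \leq \zeta_n \leq \lambda_+$ gives $\|\zeta_n\|_{L^q(\Omega)} \leq \|\lambda_+\|_{L^q}+\|\lambda_-\|_{L^q}$; testing the equation for $u_n$ against $u_n$ and arguing as in \eqref{eq:estimate_for_frac_grad_regularised_solution} yields $\sup_n\|D^s u_n\|_{L^p(\R^d;\R^d)}<\infty$. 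By Proposition \ref{prop:fractional_rellich_kondrachov} and Proposition \ref{prop:sobolev_embeddings_lions_calderon}, along a subsequence $u_n \rightharpoonup u^*$ weakly in $\Lambda^{s,p}_0(\Omega)$ with $u_n \to u^*$ strongly in $L^{q'}(\Omega)$ (since $q'<p^*_s$); Banach--Alaoglu yields $\chi_{{}_{>}}[v_n] \overset{\ast}{\rightharpoonup} \chi^*_{{}_{>}}$ and $\chi_{{}_{<}}[v_n] \overset{\ast}{\rightharpoonup} \chi^*_{{}_{<}}$ in $L^\infty(\Omega)$, both valued in $[0,1]$, and hence $\zeta_n \rightharpoonup \zeta^*:=\lambda_+\chi^*_{{}_{>}}-\lambda_-\chi^*_{{}_{<}}$ weakly in $L^q(\Omega)$.

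To recover the inclusions \eqref{eq:condition_chi} at the limit, I would use the pointwise identities $\chi_{{}_{>}}[v_n](u_n - v_n)^- = 0$ and $(1-\chi_{{}_{>}}[v_n])(u_n - v_n)^+ = 0$ a.e. Since $u_n \to u^*$ and $v_n \to v$ in $L^{q'}(\Omega)$, we have $(u_n - v_n)^\pm \to (u^* - v)^\pm$ in $L^1(\Omega)$, which pairs against the weak-$*$ convergence of $\chi_{{}_{>}}[v_n]$ in $L^\infty(\Omega)$; the two identities then pass to the limit and yield $\chi_{\{u^* > v\}} \leq \chi^*_{{}_{>}} \leq \chi_{\{u^* \geq v\}}$ a.e., with the analogous argument producing the inclusion for $\chi^*_{{}_{<}}$. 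This is the genuinely delicate step, because both $u_n$ and $v_n$ move simultaneously and the inter-phase $\{u^* = v\}$ may have positive measure.

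To show $u^*$ satisfies the equation, I would apply a Minty-type argument: from the weak formulation for $u_n$ and the monotonicity of $-\Delta^s_p$, for any $w \in \Lambda^{s,p}_0(\Omega)$,
\[
\int_\Omega (f-\zeta_n)(u_n - w)\,dx \;-\; \int_{\R^d} |D^s w|^{p-2}D^s w \cdot D^s(u_n - w)\,dx \;\geq\; 0.
\]
Letting $n \to \infty$ (using $\zeta_n \rightharpoonup \zeta^*$ in $L^q$ paired with $u_n \to u^*$ in $L^{q'}$, and $D^s u_n \rightharpoonup D^s u^*$ in $L^p$), then setting $w = u^* \pm \delta z$ with $\delta \searrow 0$, shows $-\Delta^s_p u^* + \zeta^* = f$ in $\Lambda^{-s,p'}(\Omega)$. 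Combined with the previous inclusions, $(u^*, \chi^*_{{}_{>}}, \chi^*_{{}_{<}})$ solves \eqref{eq:two_phase_equation} with datum $v$; the uniqueness of the minimiser in Proposition \ref{prop:variational_inequality} forces $u^* = u$ and $\zeta^* = \zeta_v$.

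For the strong convergence, I would test the equations for $u_n$ and $u$ against $u_n - u$ and subtract, giving
\[
\int_{\R^d} \bigl(|D^s u_n|^{p-2} D^s u_n - |D^s u|^{p-2} D^s u\bigr) \cdot D^s(u_n - u)\,dx \;=\; -\int_\Omega (\zeta_n - \zeta_v)(u_n - u)\,dx,
\]
whose right-hand side tends to $0$ by the weak-strong pairing in $L^q(\Omega) \times L^{q'}(\Omega)$. Proposition \ref{prop:lower_estimate_p_laplacian} (with $t=s$) then yields $D^s u_n \to D^s u$ strongly in $L^p(\R^d;\R^d)$, that is $u_n \to u$ in $\Lambda^{s,p}_0(\Omega)$; since the limit is uniquely determined by Theorem \ref{thm:convergence_solutions_regularised_problem}, the full sequence, not merely a subsequence, converges.
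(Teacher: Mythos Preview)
Your proof is correct and follows essentially the same approach as the paper: uniform a~priori bounds on $\zeta_n$ and $D^s u_n$, compactness to extract weak/weak-$*$ limits, identification of the quasi-characteristic inclusions via the products $\chi_{{}_{>}}[v_n](u_n-v_n)^\pm$, and finally strong convergence of $D^s u_n$ by subtracting the two equations and applying Proposition~\ref{prop:lower_estimate_p_laplacian}. Your explicit Minty--Browder step to identify the limiting equation is actually more careful than the paper's terse proof here (which simply refers back to Theorem~\ref{thm:exist_and_conv_regula_solut}), and mirrors what the paper does in Theorem~\ref{thm:convergence_measure_fro_frac_to_cla}.
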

\begin{proof}
    This follows from a very similar argument to the proof of Theorem \ref{thm:convergence_solutions_regularised_problem}. In fact, we use the uniform bound on $\zeta_n-f$ to be able to extract a subsequence still denoted by $u_n$ such that, $u_n\to u$ in $L^p(\Omega)$, $D^s u_n \rightharpoonup D^s u$ in $L^p(\R^d;\R^d)$ for some function $u\in\Lambda^{s,p}_0(\Omega)$. Then, we make use of the strong convergence in $L^p(\Omega)$ of $u_n$ and $v_n$ to $u$ and $v$, respectively, to characterize the weak limit of $\zeta_n=\lambda_+\chi_{{}_{>}}[v_n]-\lambda_-\chi_{{}_{<}}[v_n]$ to $\zeta=\lambda_+\chi_{{}_{>}}-\lambda_-\chi_{{}_{<}}$ in $L^q(\Omega)$. Together with the weak convergence of $D^s u_n$ implies that $(u,\zeta)$ is the unique solution to the equation \eqref{eq:existence_solution_subdifferential} associated to $v$. Moreover, we also have that
    \begin{equation*}
        \int_{\R^d}{\left(|D^s u|^{p-2}D^s u-|D^s u_n|^{p-2}D^s u_n\right)\cdot D^s(u-u_n)}\,dx
        =\int_{\Omega}{(\zeta_n-\zeta)(u-u_n)}\,dx.
    \end{equation*}
    Since $\zeta_n\rightharpoonup\zeta$ in $L^q(\Omega)$, with $q\geq p'$, and $u_n\to u$ in $L^p(\Omega)\subset L^{q'}(\Omega)$, we get that 
    \begin{equation*}
        \int_{\R^d}{\left(|D^s u|^{p-2}D^s u-|D^s u_n|^{p-2}D^s u_n\right)\cdot D^s(u-u_n)}\,dx\to 0 \mbox{ as } n\to\infty.
    \end{equation*}
    Then, by Proposition \eqref{prop:lower_estimate_p_laplacian}, we get that $D^s u_n\to D^s u$ in $L^p(\R^d;\R^d)$.
\end{proof}

With this result, we can prove, using fixed-point methods, an existence result for the quasi-variational two-phase problem \eqref{eq:quasi-variational_two_phase_equation}.

\begin{theorem}\label{thm:existence_theorem_qvi_two_phase}
    Let $0<s\leq 1$, $f\in L^{p^\#_s}(\Omega)$, $\lambda_{\pm}\in L^q(\Omega)$ with $\lambda_\pm(x)\geq 0$ a.e. $x\in\Omega$ and a continuous application $\Phi:L^{q'}(\Omega)\to L^{q'}(\Omega)$. Then, there exists $(u, \chi_{{}_{>}}[\Phi(u)], \chi_{{}_{< }}[\Phi(u)])\in \Lambda^{s,p}_0(\Omega)\times L^\infty(\Omega)\times L^\infty(\Omega)$ satisfying
    \begin{equation}\label{eq:implicit_problem}
        \begin{cases}
            -\Delta^s_p u+\lambda_+\chi_{{}_{>}}[\Phi(u)]-\lambda_-\chi_{{}_{<}}[\Phi(u)]=f &\mbox { in } \Omega\\
            u=0 &\mbox{ on } \R^d\setminus\Omega.
        \end{cases}
    \end{equation}
    with
    \begin{equation*}
        \chi_{\{u>\Phi(u)\}}\leq \chi_{{}_{>}}[\Phi(u)]\leq \chi_{\{u\geq \Phi(u)\}}, \quad{ and } \quad \chi_{\{u<\Phi(u)\}}\leq \chi_{{}_{<}}[\Phi(u)]\leq \chi_{\{u\leq \Phi(u)\}}
    \end{equation*}
    and $u$ solves the quasi-variational inequality
    \begin{equation}\label{eq:existence_quasi_variational_solution}
        \Psi_{\Phi(u)}(w)-\Psi_{\Phi(u)}(u)\geq \int_\Omega{(f+\Delta^s_p u)(w-u)}\,dx\quad \forall w\in \Lambda^{s,p}_0(\Omega).
    \end{equation}
  
\end{theorem}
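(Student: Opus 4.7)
The plan is to use a Schauder-type fixed-point argument for the composition $T = \Phi \circ S$, where $S : v \mapsto u$ is the solution operator of the explicit two-phase problem studied in Section \ref{sec:characterization_solutions}. By Theorem \ref{thm:convergence_solutions_regularised_problem}, for every $v \in L^{q'}(\Omega)$ there is a unique $u = S(v) \in \Lambda^{s,p}_0(\Omega)$ solving \eqref{eq:existence_solution_subdifferential} and, via the quasi-characteristic functions $\chi_{{}_{>}}[v], \chi_{{}_{<}}[v]$, solving the Dirichlet problem \eqref{eq:two_phase_equation}. A fixed point $v^* = T(v^*) = \Phi(S(v^*))$ then yields $u = S(v^*)$ with $v^* = \Phi(u)$, which is exactly what \eqref{eq:implicit_problem} and \eqref{eq:existence_quasi_variational_solution} require (the latter following directly from Proposition \ref{prop:variational_inequality} with $v = \Phi(u)$).

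First I would establish a uniform a priori bound: inspecting the estimate \eqref{eq:estimate_for_frac_grad_regularised_solution} obtained in the proof of Theorem \ref{thm:exist_and_conv_regula_solut}, the constant depends only on $\Omega, p, s, \|\lambda_\pm\|_{L^q(\Omega)}$ and $\|f\|_{L^{p^\#_s}(\Omega)}$, never on $v$. Hence there exists $R_0 > 0$ such that $S(L^{q'}(\Omega)) \subset B_{R_0} := \{w \in \Lambda^{s,p}_0(\Omega) : \|w\|_{\Lambda^{s,p}_0} \leq R_0\}$. Since $q' < p^*_s$, the Rellich-Kondrachov theorem (the compact version of Proposition \ref{prop:sobolev_embeddings_lions_calderon}) makes $B_{R_0}$ precompact in $L^{q'}(\Omega)$; call its closure $\mathcal{K}_0 \subset L^{q'}(\Omega)$, which is compact. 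Continuity of $\Phi$ on the compact set $\mathcal{K}_0$ makes $\Phi(\mathcal{K}_0)$ compact, and then by Mazur's theorem
\begin{equation*}
    K := \overline{\mathrm{conv}}\,\Phi(\mathcal{K}_0) \subset L^{q'}(\Omega)
\end{equation*}
is a nonempty convex compact subset of $L^{q'}(\Omega)$.

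Next I would verify that $T = \Phi \circ S : L^{q'}(\Omega) \to L^{q'}(\Omega)$ is continuous and maps $K$ into itself. Continuity of $S$ as a map $L^{q'}(\Omega) \to \Lambda^{s,p}_0(\Omega)$ is Proposition \ref{prop:continuous_dependence_v}; composed with the continuous inclusion $\Lambda^{s,p}_0(\Omega) \hookrightarrow L^{q'}(\Omega)$ and with the given continuous $\Phi$, it gives that $T$ is continuous. Moreover, for every $v \in K$ we have $S(v) \in B_{R_0}$, hence $S(v) \in \mathcal{K}_0$ (as an element of $L^{q'}$), so $T(v) = \Phi(S(v)) \in \Phi(\mathcal{K}_0) \subset K$. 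Schauder's fixed-point theorem applied to the continuous self-map $T : K \to K$ on the compact convex set $K$ produces a fixed point $v^* \in K$.

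Setting $u := S(v^*)$ and $\chi_{{}_{>}}[\Phi(u)] := \chi_{{}_{>}}[v^*]$, $\chi_{{}_{<}}[\Phi(u)] := \chi_{{}_{<}}[v^*]$ (recall $v^* = \Phi(u)$), the properties \eqref{eq:condition_chi} pass over verbatim to give the sandwich conditions on $\chi_{{}_{>}}[\Phi(u)]$ and $\chi_{{}_{<}}[\Phi(u)]$, and \eqref{eq:implicit_problem} is exactly \eqref{eq:two_phase_equation} with $v = \Phi(u)$; the quasi-variational inequality \eqref{eq:existence_quasi_variational_solution} is then \eqref{eq:existence_solution} with the same choice. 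The only subtle step is the construction of the compact convex invariant set, since $\Phi$ is merely continuous (not bounded a priori) — this is resolved by exploiting the $v$-independent a priori bound on $S$, which confines the image of $T$ inside the compact set $\Phi(\mathcal{K}_0)$ regardless of the size of its argument, so that taking the closed convex hull inside $L^{q'}(\Omega)$ provides the arena for Schauder's theorem.
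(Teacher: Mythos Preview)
Your argument is correct and follows essentially the same route as the paper: both apply Schauder's fixed-point theorem, relying on the $v$-independent a priori bound \eqref{eq:estimate_for_frac_grad_regularised_solution}, the continuous dependence of Proposition~\ref{prop:continuous_dependence_v}, and the compact embedding $\Lambda^{s,p}_0(\Omega)\Subset L^{q'}(\Omega)$. The only cosmetic difference is that the paper works with the map $v\mapsto$ (solution with level $\Phi(v)$) on a closed ball of $L^{q'}(\Omega)$ and invokes complete continuity directly, whereas you use the reordered composition $T=\Phi\circ S$ and construct the invariant set via Mazur's theorem on $\overline{\mathrm{conv}}\,\Phi(\mathcal{K}_0)$; your construction is correct but more elaborate than necessary, since the $v$-independent bound already confines the range of the solution map to a fixed ball.
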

\begin{proof}
    Let $L^{q'}(\Omega)\ni v\mapsto S(v)=u_v$ be the solution operator to
    \begin{equation*}
        \begin{cases}
            -\Delta^s_p u_v+\lambda_+\chi_{{}_{>}}[\Phi(v)]-\lambda_-\chi_{{}_{<}}[\Phi(v)]=f &\mbox { in } \Omega\\
            u_v=0 &\mbox{ on } \R^d\setminus\Omega.
        \end{cases}
    \end{equation*}
    Since $|\zeta_{\Phi(v)}|=|\lambda_+\chi_{{}_{>}}[\Phi(v)]-\lambda_-\chi_{{}_{<}}[\Phi(v)]|\leq \lambda_+ +\lambda_-$, from the inequality
    \begin{multline*}
        \|D^s u_v\|^p_{L^p(\R^d;\R^d)}=\langle-\Delta^s_p u_v, u_v\rangle\\
        =\int_\Omega{(f-\zeta_{\Phi(v)})u_v}\,dx\leq C\||f|+\lambda_+ +\lambda_-\|_{L^{p^\#_s}(\Omega)}\|D^s u_v\|_{L^p(\R^d;\R^d)},
    \end{multline*}
    where $C=C(s,p,\Omega)$ is the constant of the Sobolev embedding \eqref{eq:sobolev_inequality},
    it is clear that the solutions $u_v$ satisfy the following estimate independently of $v$,
    \begin{equation*}
        \|u_v\|_{L^{q'}(\Omega)}\leq C'\|D^s u_v\|_{L^p(\R^d;\R^d)}\leq C'(C\||f|+\lambda_+ +\lambda_-\|_{L^{p^\#_s}(\Omega)})^{1/(p-1)}=R
    \end{equation*}
    since $q'<p^*_s$.

    Let $B_R=\{w\in L^{q'}(\Omega): \|w\|_{L^{q'}(\Omega)}\leq R\}$, then $S(B_R)\subset S(L^{q'}(\Omega))\subset B_R$.
    
    Moreover, by the $L^{q'}$-continuity of $\Phi$, from the Proposition \ref{prop:continuous_dependence_v}, the solution map $S:L^{q'}(\Omega)\mapsto L^{q'}(\Omega)$ is completely continuous, by the compactness of $\Lambda^{s,p}_0(\Omega)\Subset L^{q'}(\Omega)$. Therefore, by Schauder's fixed point theorem, there exists a function $u=S(u)$ solving the implicit problem \eqref{eq:implicit_problem}.
\end{proof}
\begin {corollary}
Let  $f\in L^{p'}(\Omega)$, $\lambda_{\pm}\in L^{p'}(\Omega)$ with $\lambda_\pm(x)\geq 0$ a.e. $x\in\Omega$ and a continuous application $\Phi:L^{p}(\Omega)\to L^{p}(\Omega)$.
For $0<s<1$, let the triple $(u_s, \chi^s_{{}_{>}}[\Phi(u_s)], \chi^s_{{}_{< }}[\Phi(u_s)])\in \Lambda^{s,p}_0(\Omega)\times L^\infty(\Omega)\times L^\infty(\Omega)$ be a solution given in Theorem \ref{thm:existence_theorem_qvi_two_phase}. Then, for any $0\leq t<1$, there exists a subsequence $s\to 1$, 
    \begin{equation}\label{eq:convergence_u_s}u_s\to u \mbox{ in } \Lambda^{t,p}_0(\Omega),\quad D^su_s\to Du \mbox{ in } L^p(\R^d;\R^d),
    \end{equation}
    and
    \begin{equation}\label{eq:convergence_quasi_characteristics}
        \chi^s_{{}_{>}}[\Phi(u_s)]\overset{\ast}{\rightharpoonup}\chi^1_{{}_{>}}[\Phi(u)] \quad\mbox{ and }\quad \chi^s_{{}_{<}}[\Phi(u_s)]\overset{\ast}{\rightharpoonup}\chi^1_{{}_{<}}[\Phi(u)] \quad\mbox{ in } L^\infty(\Omega), 
    \end{equation}
    where $(u,\chi^1_{{}_{>}}[\Phi(u)], \chi^1_{{}_{<}}[\Phi(u)])\in W^{1,p}_0(\Omega)\times L^\infty(\Omega)\times L^\infty(\Omega)$ is a solution to \eqref{eq:implicit_problem} with $s=1$, i.e. with the $p$-Laplacian.
    
    In addition, if we assume also that \text{meas}$(\{u=\Phi(u)\})=0$, then
    \begin{equation*}
        \chi^s_{{}_{>}}[\Phi(u_s)]\to\chi_{\{u>\Phi(u)\}} \mbox{ and } \chi^s_{{}_{<}}[\Phi(u_s)]\to\chi_{\{u<\Phi(u)\}} \mbox{ strongly in } L^r(\Omega), r<\infty.
    \end{equation*}
\end{corollary}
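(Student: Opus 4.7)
The plan is to recognize this corollary as essentially a direct consequence of the stability Theorem \ref{thm:convergence_measure_fro_frac_to_cla}, applied with $v_s:=\Phi(u_s)$, once one has first extracted a subsequence along which $u_s$ converges strongly in $L^p(\Omega)$ so that $\Phi(u_s)\to\Phi(u)$ by continuity of $\Phi$.

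First I would establish a uniform energy estimate. Since $|\zeta_s|=|\lambda_+\chi^s_{{}_{>}}[\Phi(u_s)]-\lambda_-\chi^s_{{}_{<}}[\Phi(u_s)]|\leq \lambda_++\lambda_-$ pointwise, testing \eqref{eq:implicit_problem} against $u_s$ and combining the fractional Sobolev embedding \eqref{eq:sobolev_inequality} with the Poincaré inequality of Proposition \ref{prop:fractional_poincare_inequality} yields
\begin{equation*}
    \|D^s u_s\|^{p-1}_{L^p(\R^d;\R^d)}\leq \frac{C(\Omega,p)}{\sigma}\bigl(\|\lambda_+\|_{L^{p'}(\Omega)}+\|\lambda_-\|_{L^{p'}(\Omega)}+\|f\|_{L^{p'}(\Omega)}\bigr)
\end{equation*}
uniformly for $s\in(\sigma,1]$. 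By Proposition \ref{prop:compactness_ala_bellido} one extracts a subsequence, still denoted $s\to 1$, and a function $u\in W^{1,p}_0(\Omega)$ such that $u_s\to u$ in $\Lambda^{t,p}_0(\Omega)$ for every $0\leq t<1$ and $D^su_s\rightharpoonup Du$ in $L^p(\R^d;\R^d)$. Fractional Rellich-Kondrachov (Proposition \ref{prop:fractional_rellich_kondrachov}) then gives in particular $u_s\to u$ in $L^p(\Omega)$, and the continuity of $\Phi:L^p(\Omega)\to L^p(\Omega)$ promotes this to $\Phi(u_s)\to\Phi(u)$ in $L^p(\Omega)$.

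Next I would invoke Theorem \ref{thm:convergence_measure_fro_frac_to_cla} with the choices $f_s\equiv f$, $\lambda^s_\pm\equiv\lambda_\pm$, and $v_s:=\Phi(u_s)\to\Phi(u)=:v$ in $L^p(\Omega)$. The hypothesis $p'>p^\#_1$ is satisfied by the embedding considerations of Section \ref{sec:preliminaries}. For each $s<1$, the triple $(u_s,\chi^s_{{}_{>}}[\Phi(u_s)],\chi^s_{{}_{<}}[\Phi(u_s)])$ is, by construction, a solution to the Dirichlet problem \eqref{eq:two_phase_equation} with level-set datum $v_s$, so Theorem \ref{thm:convergence_measure_fro_frac_to_cla} provides:  the upgrade of the gradient convergence from weak to strong, i.e.\ $D^su_s\to Du$ in $L^p(\R^d;\R^d)$; the weak-$\ast$ convergences $\chi^s_{{}_{>}}[\Phi(u_s)]\overset{\ast}{\rightharpoonup}\chi^1_{{}_{>}}$ and $\chi^s_{{}_{<}}[\Phi(u_s)]\overset{\ast}{\rightharpoonup}\chi^1_{{}_{<}}$ in $L^\infty(\Omega)$; and the identification of $(u,\chi^1_{{}_{>}},\chi^1_{{}_{<}})$ as a solution of \eqref{eq:two_phase_equation} at $s=1$ with level-set function $\Phi(u)$. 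The consistency relations \eqref{eq:condition_chi} then read $\chi_{\{u>\Phi(u)\}}\leq\chi^1_{{}_{>}}\leq\chi_{\{u\geq\Phi(u)\}}$ and symmetrically for $\chi^1_{{}_{<}}$, which is precisely what it means for $(u,\chi^1_{{}_{>}},\chi^1_{{}_{<}})$ to solve the implicit problem \eqref{eq:implicit_problem} at $s=1$; we therefore write $\chi^1_{{}_{>}}=\chi^1_{{}_{>}}[\Phi(u)]$ and $\chi^1_{{}_{<}}=\chi^1_{{}_{<}}[\Phi(u)]$.

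Finally, under the nondegeneracy assumption $\mathcal{L}^d(\{u=\Phi(u)\})=0$, the last part of Theorem \ref{thm:convergence_measure_fro_frac_to_cla} directly yields the strong $L^r(\Omega)$ convergence of $\chi^s_{{}_{>}}[\Phi(u_s)]$ and $\chi^s_{{}_{<}}[\Phi(u_s)]$ to $\chi_{\{u>\Phi(u)\}}$ and $\chi_{\{u<\Phi(u)\}}$, respectively, for every $r<\infty$. I do not anticipate a genuine obstacle: the only delicate point is that the level-set datum depends on the unknown through $\Phi(u_s)$, which superficially looks like a fixed-point issue, but this is defused by the strong $L^p$-compactness of $u_s$ together with the continuity of $\Phi$, after which everything reduces to a direct application of the previously proven stability theorem.
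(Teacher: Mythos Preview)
Your proposal is correct and follows essentially the same approach as the paper: obtain a uniform bound on $\|D^s u_s\|_{L^p}$ from the equation, extract a subsequence via Proposition~\ref{prop:compactness_ala_bellido} so that $u_s\to u$ in $L^p(\Omega)$, use the continuity of $\Phi$ to get $\Phi(u_s)\to\Phi(u)$, and then read off all conclusions from Theorem~\ref{thm:convergence_measure_fro_frac_to_cla} with $v_s=\Phi(u_s)$. The paper's proof is simply a terser statement of exactly this argument.
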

\begin{proof}
Since the $u_s$ clearly satisfy the estimate \eqref{eq:estimate_for_frac_grad_regularised_solution}, by compactness we can extract a subsequence $s\to 1$, such that $u_s\to u$ in $\Lambda^{t,p}_0(\Omega)$, for some $u\in W^{1,p}_0(\Omega)$. Then $\Phi(u_s)\to \Phi(u)$ and the conclusions follow as in in Theorem \ref{thm:convergence_measure_fro_frac_to_cla}.
\end{proof}
\begin{corollary}[Unstable problem]
    Let $f\in L^{p^\#_s}(\Omega)$, $v\in L^{q'}(\Omega)$ and $\lambda_{\pm}\in L^q(\Omega)$ with $\lambda_\pm\geq 0$ a.e. $x\in\Omega$. There exist a triple $(u, \Tilde{\chi}_{{}_{>}}[v], \Tilde{\chi}_{{}_{<}}[v])\in \Lambda^{s,p}_0(\Omega)\times L^\infty(\Omega)\times L^\infty(\Omega)$ satisfying
    \begin{equation*}
        \begin{cases}
            -\Delta^s_p u+\lambda_+\Tilde{\chi}_{{}_{<}}[v]-\lambda_-\Tilde{\chi}_{{}_{>}}[v]=f &\mbox { in } \Omega\\
            u=0 &\mbox{ on } \R^d\setminus\Omega.
        \end{cases}
    \end{equation*}
    with
    \begin{equation*}
        \chi_{\{u>v\}}\leq \Tilde{\chi}_{{}_{>}}[v]\leq \chi_{\{u\geq v\}}, \quad{ and } \quad \chi_{\{u<v\}}\leq \Tilde{\chi}_{{}_{<}}[v]\leq \chi_{\{u\leq v\}}.
    \end{equation*}
\end{corollary}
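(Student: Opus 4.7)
The unstable problem has no variational structure (the low-order term is now anti-monotone in $u$), so I would combine the $H_\varepsilon$--regularization from Section \ref{sec:characterization_solutions} with a Schauder fixed point argument in the spirit of the proof of Theorem \ref{thm:existence_theorem_qvi_two_phase}. For each $\varepsilon>0$, I would consider the regularized system
\begin{equation*}
    -\Delta^s_p u_\varepsilon+\lambda_+ H_\varepsilon(v-u_\varepsilon)-\lambda_- H_\varepsilon(u_\varepsilon-v)=f\mbox{ in }\Omega,\qquad u_\varepsilon=0\mbox{ on }\R^d\setminus\Omega,
\end{equation*}
and obtain $u_\varepsilon$ by applying Schauder's theorem to the operator $T_\varepsilon:L^{q'}(\Omega)\to L^{q'}(\Omega)$ sending $w$ to the unique $\Lambda^{s,p}_0(\Omega)$--solution of $-\Delta^s_p u=f-\lambda_+ H_\varepsilon(v-w)+\lambda_- H_\varepsilon(w-v)$. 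Since $0\leq H_\varepsilon\leq 1$, the right-hand side is uniformly bounded in $L^{p^\#_s}(\Omega)\subset \Lambda^{-s,p'}(\Omega)$, so the classical Minty--Browder theory for the monotone coercive $-\Delta^s_p$ yields a uniform bound $\|T_\varepsilon(w)\|_{\Lambda^{s,p}_0(\Omega)}\leq R$ independent of $w$, hence $T_\varepsilon$ sends a suitable $L^{q'}$--ball into itself. The Lipschitz regularity of $H_\varepsilon$, continuous dependence of the Dirichlet problem on the datum (via Proposition \ref{prop:lower_estimate_p_laplacian}), and the compact embedding $\Lambda^{s,p}_0(\Omega)\Subset L^{q'}(\Omega)$ from Proposition \ref{prop:fractional_rellich_kondrachov} render $T_\varepsilon$ continuous and compact, so Schauder produces $u_\varepsilon$.

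I would then let $\varepsilon\to 0$. The uniform $\Lambda^{s,p}_0$--bound and Rellich compactness give, along a subsequence, $u_\varepsilon\to u$ in $L^p(\Omega)$ and $D^s u_\varepsilon\rightharpoonup D^s u$ in $L^p(\R^d;\R^d)$, while weak-$*$ compactness in $L^\infty(\Omega)$ provides $H_\varepsilon(v-u_\varepsilon)\overset{*}{\rightharpoonup}\tilde\chi_{{}_{<}}[v]$ and $H_\varepsilon(u_\varepsilon-v)\overset{*}{\rightharpoonup}\tilde\chi_{{}_{>}}[v]$. The required pointwise bounds $\chi_{\{u<v\}}\leq\tilde\chi_{{}_{<}}[v]\leq\chi_{\{u\leq v\}}$ and $\chi_{\{u>v\}}\leq\tilde\chi_{{}_{>}}[v]\leq\chi_{\{u\geq v\}}$ then follow verbatim from the calculation carried out in Step 2 of the proof of Theorem \ref{thm:exist_and_conv_regula_solut}, which uses only the shape of $H_\varepsilon$ (namely $H_\varepsilon(t)=0$ for $t\leq 0$ and $(1-H_\varepsilon(t))t\leq\varepsilon$ for $t\geq 0$) and the strong $L^p$--convergence of $u_\varepsilon$, and is therefore insensitive to the reversed sign of the low-order term in the unstable equation.

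Setting $\zeta_\varepsilon=\lambda_+ H_\varepsilon(v-u_\varepsilon)-\lambda_- H_\varepsilon(u_\varepsilon-v)$ and $\zeta=\lambda_+\tilde\chi_{{}_{<}}[v]-\lambda_-\tilde\chi_{{}_{>}}[v]$, the compactness of the dual embedding $L^q(\Omega)\Subset\Lambda^{-s,p'}(\Omega)$ upgrades the weak convergence of $\zeta_\varepsilon$ to strong convergence in $\Lambda^{-s,p'}(\Omega)$, so $-\Delta^s_p u_\varepsilon=f-\zeta_\varepsilon$ converges strongly in the dual. Combined with $u_\varepsilon\rightharpoonup u$ in $\Lambda^{s,p}_0(\Omega)$ and the monotone hemicontinuity of $-\Delta^s_p$, Minty's monotonicity trick identifies the weak limit of $|D^s u_\varepsilon|^{p-2}D^s u_\varepsilon$ with $|D^s u|^{p-2}D^s u$, so $u$ solves the unstable two-phase equation in the weak sense. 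This final limit passage is the main obstacle of the argument: because the low-order term is anti-monotone in $u$, the test-function computation of Step 3 in the proof of Theorem \ref{thm:exist_and_conv_regula_solut} cannot be imitated and strong convergence of the fractional gradients $D^s u_\varepsilon$ is not available; only the Minty argument, made accessible by the strong dual convergence of $\zeta_\varepsilon$, permits closing the passage to the limit, and this is also why the corollary asserts existence only, with no uniqueness for $u$, $\tilde\chi_{{}_{>}}[v]$, or $\tilde\chi_{{}_{<}}[v]$.
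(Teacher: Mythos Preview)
Your argument is correct, but it is considerably more involved than the paper's proof, which is a one-line application of Theorem~\ref{thm:existence_theorem_qvi_two_phase} with the affine map $\Phi(w)=2w-v$. The key observation there is that $u>\Phi(u)\iff u>2u-v\iff u<v$ and likewise $u<\Phi(u)\iff u>v$, so the quasi-characteristic functions $\chi_{{}_{>}}[\Phi(u)]$ and $\chi_{{}_{<}}[\Phi(u)]$ produced by Theorem~\ref{thm:existence_theorem_qvi_two_phase} satisfy exactly the swapped bounds required for $\tilde\chi_{{}_{<}}[v]$ and $\tilde\chi_{{}_{>}}[v]$, respectively, and the equation in Theorem~\ref{thm:existence_theorem_qvi_two_phase} becomes the unstable equation verbatim. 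In other words, the anti-monotone low-order term is absorbed into the implicit level-set framework rather than confronted directly. Your route---$H_\varepsilon$-regularization, Schauder at fixed $\varepsilon$, then $\varepsilon\to 0$ via compactness of $L^q(\Omega)\hookrightarrow\Lambda^{-s,p'}(\Omega)$ and Minty's trick---is self-contained and does not rely on the quasi-variational existence theorem, which is a genuine advantage if one wants the corollary to stand alone; it also makes transparent why only existence (and no uniqueness) is claimed. The paper's approach, by contrast, buys brevity and shows that the unstable problem is not a new analytical object but a special case of the implicit problem already treated.
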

\begin{proof}
    Simple application of Theorem \ref{thm:existence_theorem_qvi_two_phase} with $\Phi:w\to 2w-v$.
\end{proof}
Several examples can be provided for the interaction of the solution with the definition of the inter-phases. We give a few examples for the choice of the operator $\Phi$.
\begin{remark}
    We can consider $\Phi:L^{p}(\Omega)\to L^{p}(\Omega)$ to be a Nemytskii operator defined by 
    \begin{equation}
        \Phi(u)(x)=\phi(x,u(x)) 
    \end{equation}with a Carathéodory function $\phi:\Omega\times \R \to \R$ with at most affine growth, that is
    \begin{equation}\label{eq:growth_hypothesis_for nemyntskii}
        \phi(x,w)\leq \phi_0(x)+C_1|w|, \quad \mbox {for}\quad x\in\Omega, w\in\R,
    \end{equation}
    with $0\leq\phi_0\in L^{p}(\Omega)$ and $C_1\geq 0$.
    
    Another particularly interesting example is the case in which $\Phi$ is given by a Uryson operator,
    \begin{equation}
        \Phi(u)(x)=\int_\Omega{\tau(x, y, u(y))}\,dy,\quad \mbox{ for a.e. } x\in\Omega,
    \end{equation}
    where $\tau(x,y,w):\Omega\times \Omega\times\R\to\R$ is a Carathéodory function, i.e., continuous in $w$ and measurable for a.e. $(x,y)$, and satisfying the inequality
    \begin{equation*}
        |\tau(x,y,w)|\leq \phi(x,y)+C|w|, \mbox{ for a.e. } (x,y)\in\Omega\times\Omega \mbox{ and for all } r\in \R
    \end{equation*}
    with $0\leq \phi\in L^p(\Omega\times\Omega)=L^p(\Omega;L^p(\Omega))$ and $C>0$. In fact, to prove that $u\mapsto \Phi(u)=v$ is continuous, as an operator in $L^p(\Omega)$, we may consider any sequence of functions $w_n\in L^p(\Omega)$ converging to $w\in L^p(\Omega)$ as $n\to\infty$. We have $\tau(x,y, w_n(y))\to \tau(x,y,w(y))$ in measure as $n\to\infty$, for a.e. $(x,y)\in\Omega\times \Omega$, because $\tau$ is a Carathéodory function. Moreover, for any measurable subset $\mathcal{C}\subset \Omega$ we have
    \begin{equation*}
        \lim_{|\mathcal{C}|\to 0}{\int_{\mathcal{C}}{|\tau(x, y, w_n(y))|^p}\,dy}\leq \lim_{|\mathcal{C}|\to 0}2^p{\int_{\mathcal{C}}{[|\phi(x, y)|^p+C|w_n|^p]}\,dy}=0 
    \end{equation*}
    uniformly with respect to $n$, because of Vitalli's theorem applied to the sequence $w_n$ that converges to $w$ in $L^p(\Omega)$. Using again Vitalli's theorem, we obtain that $\tau(x, \cdot, w_n(\cdot))\to \tau(x, \cdot, w(\cdot))$ in $L^p(\Omega)$ for a.e. $x\in\Omega$.
    The continuity of $\Phi$ then follows from Lebesgue's dominated convergence theorem.
\end{remark}

\begin{remark}
 A quasi-variational two-phase obstacle-type problem can be given formally in terms of the coupled system of equations
    \begin{equation}\label{eq:auxiliary_problem_for_qvi}
        \begin{cases}
            -\Delta^s_p u+\lambda_+\chi_{\{u>v\}}-\lambda_-\chi_{\{u<v\}}=f &\mbox { in } \Omega\\
            -\Delta^{t}_p v=T(u) & \mbox{in }\Omega,\\
            u=v=0 &\mbox{on } \R^d\setminus \Omega,
        \end{cases}
    \end{equation}
    where $0<s\leq t\leq 1$ and $T$ is a suitable operator, for instance, the truncation defined by
    \begin{equation}\label{eq:truncation}
        T(w)(x)= max\{g_-(x), min[w(x), g_+(x)]\}\quad a.e.\,\,
        x\in\Omega,
    \end{equation}
    with $g_\pm\in L^\infty(\Omega)$, $g_-\leq g_+$, and $f,\lambda_{\pm}\in L^{\infty}(\Omega)$, for simplicity. Then, if $v=S(u)\in\Lambda^{t,p}_0(\Omega) $ is the solution of the Dirichlet problem for the $-\Delta^t_p$ with $T(u)\in L^\infty(\Omega)$ as given data, clearly, $u\mapsto \Phi(u)=v$ is a completely continuous operator in $L^p(\Omega)$. Consequently, Theorem \ref{thm:existence_theorem_qvi_two_phase} guarantees the existence of  $(u, \chi_{{}_{>}}[\Phi(u)], \chi_{{}_{< }}[\Phi(u)])\in \Lambda^{s,p}_0(\Omega)\times L^\infty(\Omega)\times L^\infty(\Omega)$,  which is a weak solution of the system \eqref{eq:auxiliary_problem_for_qvi} with $\Phi(u)=v$.

    Consider now the coupled system \eqref{eq:auxiliary_problem_for_qvi} with the Laplacian, i.e. with
    $s=t=1$ and $p=2$. Then, both $u,v\in W^{2,r}_\text{loc}(\Omega)$, for all $r<\infty $, by the linear theory of regularity to the Dirichlet problem. Then, as in Remark \ref{nondegenerate},  if, in addition, we assume 
    \begin{equation*}
        -g_+(x) +\lambda_+(x)<f(x)\quad\mbox{ or }\quad -g_-(x)-\lambda_-(x)>f(x) \quad \mbox{ a.e. in } \Omega.
    \end{equation*}
   the nondegeneracy property 
    \begin{equation*}
        \mathcal{L}^d(\Xi^1_{\Phi(u)})=\text{meas}\{u=\Phi(u)\}=0.
    \end{equation*}
    holds, since \eqref{eq:sufficient_condition_nondegeneracy_with_translation} is satisfied. Consequently, the quasi-characteristic functions $\chi_{{}_{>}}[\Phi(u)]$ and $\chi_{{}_{< }}[\Phi(u)])$ are in fact characteristic functions and the pair $(u,v)$ solves a.e. the nonlinear system \eqref{eq:auxiliary_problem_for_qvi}.
\end{remark}


\end{document}